\newtheorem{definition}{\bf Definition}[section]
\newtheorem{lemma}{\bf Lemma}[section]
\newtheorem{theorem}{\bf Theorem}[section]
\newtheorem{remark}{\bf Remark}[section]
\newtheorem{corollary}{\bf Corollary}[section]
\newtheorem{example}{\bf Example}[section]
\newtheorem{proposition}{\bf Proposition}[section]
\newenvironment{breakablealgorithm}
  {
   \begin{center}
     \refstepcounter{algorithm}
     \hrule height.8pt depth0pt \kern2pt
     \renewcommand{\caption}[2][\relax]{
       {\raggedright\textbf{\ALG@name~\thealgorithm} ##2\par}%
       \ifx\relax##1\relax 
         \addcontentsline{loa}{algorithm}{\protect\numberline{\thealgorithm}##2}%
       \else 
         \addcontentsline{loa}{algorithm}{\protect\numberline{\thealgorithm}##1}%
       \fi
       \kern2pt\hrule\kern2pt
     }
  }{
     \kern2pt\hrule\relax
   \end{center}
  }
\journal{}
\begin{document}

\begin{frontmatter}



\title{Extremality criteria for the supereigenvector space in max-plus algebra}

\author{Serge{\u{\i}} Sergeev\corref{cors}}

\author{Hui-li Wang\corref{corw}}

\cortext[cors]{University of Birmingham, School of Mathematics, Edgbaston B15 2TT, UK. Email: s.sergeev@bham.ac.uk.}

\cortext[corw]{School of Sciences, Southwest Petroleum University, Chengdu, Sichuan 610500, China. Email:huiliwang77@163.com. Corresponding author.\\
Supported by the National Natural Science Foundation of China (No.11901486), Youth Science and Technology Innovation Team of SWPU for Nonlinear Systems (No. 2017CXTD02).}

\begin{abstract} 
We present necessary and sufficient criteria for a max-algebraic supereigenvector, i.e., a solution of the system $A\otimes\textbf{x}\geq\textbf{x}$ with $A\in\overline{\mathbb{R}}^{n\times n}$ in max-plus algebra, to be an extremal.  We also show that the suggested extremality criteria can be verified in $O(n^2)$ time for any given solution 
$\textbf{x}$.
\end{abstract}

\begin{keyword}
max-plus algebra \sep generator \sep extremal \sep tangent digraph
\MSC 15A80 \sep 15A39
\end{keyword}
\end{frontmatter}

\section{Introduction}\label{intro}

Max-plus algebra, as analogue of linear algebra, defined on $\overline{\mathbb{R}}:=\mathbb{R}\cup \{-\infty\}$ equipped with the operations $(\oplus, \otimes)$, where addition $a\oplus b=\max(a, b)$ and multiplication $a\otimes b=a+b$. Note that $-\infty$ is neutral for $\oplus$ and $0$ is neutral for $\otimes$. The fact that $(\overline{\mathbb{R}}, \oplus, \otimes)$ is a commutative idempotent semiring implies that many tools derived from linear algebra can be applied to max-plus algebra. The advantage of linear techniques enables us to transform nonlinear problems into linear problems with respect to the two operations $(\oplus, \otimes)$ in max-plus algebra. As a result, max-plus algebra has proved to be useful in automata
theory, scheduling theory and discrete event systems, as described in a number of monographs~\cite{Baccelli1992,Peter2010,GondranMinoux2008,MPatWork}.

In max-plus algebra $(\overline{\mathbb{R}}, \oplus, \otimes)$, the operations $(\oplus, \otimes)$ and natural order $\leq$ of real numbers can be extended
to matrices and vectors as in classical linear
algebras. If $A=(a_{ij}),\mbox{}\;B=(b_{ij})$ are matrices with elements from $\overline{\mathbb{R}}$ of
compatible sizes, then we can define the max-plus addition, product and scalar multiples of matrices by $(A\oplus B)_{ij}=a_{ij}\oplus b_{ij}$, $(A\otimes B)_{ij}=\sum_{k}^{\oplus}a_{ik}\otimes
b_{kj}=\mbox{max}_{k}(a_{ik}+b_{kj})$ and $(\lambda\otimes A)_{ij}=\lambda\otimes a_{ij}$ for $\lambda\in \overline{\mathbb{R}}$ respectively.
We write $A\leq B$ if 
$a_{ij}\leq b_{ij}$ for all $i,\mbox{}\;j$.
We denote by $I$ any square matrix whose diagonal entries are $0$ and off-diagonal ones are $-\infty$.
Throughout the paper we also denote by $-\infty$ any vector or matrix whose every component is $-\infty$, by abuse of notation.
For matrices, $-\infty$ is neutral for $\oplus$ and $I$ for $\otimes$. Throughout the paper, the set of indices $\{1,2,\cdots, n\}$ is denoted by $N$.

Given $A\in \overline{\mathbb{R}}^{n\times n}$, the
problem of finding a vector
$\textbf{x}\in\overline{\mathbb{R}}^{n}$, $\textbf{x}\neq-\infty$ and a scalar
$\lambda\in\overline{\mathbb{R}}$ satisfying
\begin{eqnarray}\label{EqSup}
A\otimes\textbf{x}\geq\lambda\otimes\textbf{x}
\end{eqnarray}
is called the
(max-plus algebraic) supereigenproblem, the vector $\textbf{x}$ is called a
supereigenvector of $A$ with associated eigenvalue $\lambda$. The supereigproblem is closely related to the eigenproblem and the subeigenproblem, which are defined by finding a vector
$\textbf{x}\in\overline{\mathbb{R}}^{n}$, $\textbf{x}\neq-\infty$ and a scalar
$\lambda\in\overline{\mathbb{R}}$ satisfying
\begin{eqnarray}\label{EqEig}
A\otimes\textbf{x}=\lambda\otimes\textbf{x}
\end{eqnarray}
and
\begin{eqnarray}\label{EqSub}
A\otimes\textbf{x}\leq\lambda\otimes\textbf{x}
\end{eqnarray}
respectively.  Problems~\eqref{EqEig} and \eqref{EqSub} appeared much earlier and were thoroughly studied in the literature.

Eigenproblem~\eqref{EqEig} plays a vital role in max-plus algebra. The study of this problem first appeared in the work of Cuninghame-Green \cite{R.A.1962} in connection with the study of the steady-state behavior of multi-machine interactive production system, and the existence of an eigenvector was independently proved by Vorobyov \cite{Vorobyov1967}.
Full solution for eigenproblem in the case of irreducible
matrices was given by Cuninghame-Green \cite{R.A.1979} and Gondran and Minoux \cite{M.Gondran77}. 
The case of reducible matrices was treated in detail by Butkovi\v{c} \cite{Peter2010}, but the main ideas can be traced back to 
Gaubert \cite{Gaubert1992} and Bapat et al.~\cite{Bapat.1993} where the spectral theorem for reducible matrices was proved.

The study of max-plus subeigenvectors can be traced back to Gaubert~\cite{Gaubert1992}. However, in the literature on nonnegative matrices they appeared much earlier: see, for example, the work by Engel and Schneider~\cite{Engel1973}. Later, Butkovi\v{c} and Schneider~\cite{Peter2005} used them to describe some useful diagonal similarity scalings and Krivulin~\cite{Krivulin} used them to give a complete solution to certain optimization problems over max-plus algebra.    

Let us start our study of the tropical supereigenvectors with a trivial case. If $\lambda=-\infty$, then $A\otimes\textbf{x}\geq\lambda\otimes\textbf{x}$ for any
$\textbf{x}\in\overline{\mathbb{R}}^{n}$. Hence we only need to consider the case of $\lambda\neq-\infty$ in \eqref{EqSup}. Note that in this case \eqref{EqSup} can equivalently be written as
\begin{eqnarray}\label{EqSupnolam}
A\otimes\textbf{x}\geq\textbf{x}
\end{eqnarray}
by multiplying both sides of \eqref{EqSup} by $\lambda^{-1}$ 
and then writing $A$ instead of $\lambda^{-1}\otimes A$.
Then we denote $\mathscr{X}=\{\textbf{x}\in\overline{\mathbb{R}}^{n}:
A\otimes\textbf{x}\geq\textbf{x}, \textbf{x}\neq-\infty\}$ and $\mathscr{X}'=\mathscr{X}\cup\{-\infty\}$ in the case of $\mathscr{X}\neq\emptyset$. 
Set $\mathscr{X}'$ is the solution space of \eqref{EqSupnolam} with $A\in\overline{\mathbb{R}}^{n\times n}$. System~\eqref{EqSupnolam} consists of 
a finite number of inequalities and hence, by the result of~\cite{Peter1984}, it is a finitely generated subspace of $\overline{\mathbb{R}}^{n}$. For $A\otimes \textbf{x}\geq \textbf{x}$ with $A\in \overline{\mathbb{R}}^{n\times n}$ and $\textbf{x}=(x_1, x_2, \cdots, x_n)^T\in\overline{\mathbb{R}}^{n}$, $A_{i}\otimes\textbf{x}\geq x_{i}$ stands for the $i$-inequality of the system $A\otimes \textbf{x}\geq \textbf{x}$, where $A_{i}$ denotes the $i$th row of $A$.

The problem of finding all max-plus supereigenvectors of a given matrix was posed by Butkovi\v{c}, Schneider and Sergeev \cite{Peter2012}, motivated by the study of robustness in max-plus linear systems. An algorithm which can be used to find all generators for the solution space of \eqref{EqSupnolam} was subsequently presented by Wang and Wang \cite{wanghuili2014}, who also gave two algorithms for finding a finite proper supereigenvector for irreducible matrix and a finite solution of the system  \eqref{EqSupnolam} for reducible matrix \cite{wanghuili2015InformationSciences}. Based on the results in \cite{wanghuili2014}, Wang, Yang and Wang developed an improved algorithm which can be used to find a smaller set of generators \cite{wanghuili2020}, and Sergeev~\cite{Sergeev2015}  gave a combinatorial description of those generators obtained by Wang and Wang~\cite{wanghuili2014} that are extremal.
Note that another method for finding a non-trivial subset of a set of generators of finite supereigenvectors follows from the proof of the main statement in the work of Butkovi\v{c}~\cite{Peter2016}.

Note that \eqref{EqSupnolam} can be seen as a two-sided system $A\otimes\textbf{x}\geq B\otimes\textbf{x}$, where $B$ is a max-plus identity matrix.  Checking the solvability 
$A\otimes\textbf{x}\geq B\otimes\textbf{x}$ is a known NP$\cap$Co-NP problem \cite{Bezem2010}, for which no polynomial method has been found. Recently Allamigeon, Gaubert and Goubault \cite{Allamigeon2013} suggested a major improvement of the method of Butkovi\v{c}~\cite{Peter1984} to produce a set of generators for the solution space of such problem and also gave a criterion of extremality for a given solution in \cite[Theorem 1]{Allamigeon2013}. Their criterion is based on the notion of tangent directed hypergraph and connectivity in a hypergraph, and its verification requires an almost linear time. More precisely, when applied to \eqref{EqSupnolam}  with $A\in \overline{\mathbb{R}}^{n\times n}$, it requires $O(n^2)\times \alpha(n)$ time, where 
$\alpha(n)$ is the inverse of the Ackermann function.

One of the main ideas of the present paper is that for \eqref{EqSupnolam}, the tangent directed hypergraph can be replaced with a digraph (which we call tangent), and we do not need the more elaborate notion of connectivity in hypergraphs. The main result of the present paper is Theorem~\ref{Th:main}. It states three criteria which an extremal generator should satisfy, and one of these criteria means that the tangent digraph should be intercyclic.  We show that the joint verification of these criteria requires $O(n^2)$ time, i.e., it is linear (although verifying the intercyclic property alone is more complex~\cite{Mccuaig1993}). The joint verification of these criteria can be performed by a simple procedure, which we describe in the paper. However, such reduction in complexity is achieved only for the system $A\otimes\textbf{x}\geq \textbf{x}$ and not in the general case.


The rest of the paper is organized as follows. In Section~\ref{s:prel}, we give a brief overview of basic facts in max-plus algebra which we will need, following \cite{Peter2010}, and introduce the notion of tangent digraph. In Section~\ref{s:extre} we formulate and prove the main result of this paper, Theorem~\ref{Th:main}. This section is split into two subsections: the first subsection introduces and studies the notions of variable and invariable nodes of a tangent digraph, and the second subsection contains the main part of the proof of Theorem~\ref{Th:main}. Finally, in Section~\ref{s:comp} we show that the joint verification of the criteria stated in Theorem~\ref{Th:main} takes $O(n^2)$ time.


\section{Preliminaries}
\label{s:prel}

Given a square matrix $A=(a_{ij})\in \overline{\mathbb{R}}^{n\times n}$ the symbol
$D_{A}$ denotes the weighted directed graph $(N, E, w)$ with the node set $N$ and arc set
$E=\{(i, j): a_{ij}\neq-\infty\}$ and weight
$\omega(i,j) = a_{ij}$ for all $(i,j)\in E$.

A sequence of nodes $\pi=(i_{1},\cdots, i_{k})$ in $D_{A}$ is called a {\em path} (in $D_{A}$) if $(i_{j}, i_{j+1})\in E$ for all $j=1,\cdots,k-1$.
The length of such path $\pi$, denoted by $l(\pi)$, is $k-1$. The weight of such path $\pi$ is denoted by $\omega(\pi)$ and is equal to $a_{i_{1}i_{2}}+\cdots+a_{i_{k-1}i_{k}}$. If $\omega(\pi)\geq 0$ then $\pi$ is called a {\em nonnegative path.} Node $i_{1}$ is called the {\em starting node} and $i_{k}$ the {\em endnode} of $\pi$, respectively. A path from $i_{1}$ to $i_{k}$ can be called a $i_{1}-i_{k}$ path. Further $i_{k}$ is said to be {\em reachable} or {\em accessible} from $i_{1}$, notation $i_{1}\rightarrow i_{k}$.
If $i_{1}=i_{k}$ then the path $(i_{1},\cdots, i_{k})$ is called a {\em cycle}. Moreover, if $i_{p}\neq i_{q}$ for all $p, q=1,\cdots, k-1,
p\neq q$ then that cycle is called an {\em elementary cycle} (similarly, we can define {\em elementary path}). Cycle $(i_{1}, i_{2}=i_{1})$ is called a {\em loop}. 

Let $S\subseteq\overline{\mathbb{R}}^{n}$. We say that $S$ is a {\em subspace} of $\mathbb{R}^n$ if $a\otimes\textbf{u}\oplus b\otimes\textbf{v}\in S$ for every $\textbf{u}, \textbf{v}\in S$ and $a, b\in \overline{\mathbb{R}}$.
A vector $\textbf{v}=(v_{1}, \cdots,
v_{n})^{T}\in\overline{\mathbb{R}}^{n}$ is called a {\em max-combination}
of $S$ if $\textbf{v}=\sum^{\oplus}_{\textbf{x}\in
S}\alpha_{\textbf{x}}\otimes \textbf{x}, \alpha_{\textbf{x}}\in
\overline{\mathbb{R}}$, where only a finite number of
$\alpha_{\textbf{x}}$ are real. Denote the set of all max-combinations of
$S$ by $\operatorname{span}(S)$. $S$ is called a set of {\em generators}
for $T$ if $\operatorname{span}(S)=T$. The set $S$ is called {\em (weakly) dependent} if $\textbf{v}$ is a
max-combination of $S\backslash\{\textbf{v}\}$ for some $\textbf{v}\in S$.
Otherwise $S$ is (weakly) independent. Let $S,
T\subseteq\overline{\mathbb{R}}^{n}$. If $S$ is an independent set of generators for $T$ then the set $S$ is called a basis
of $T$.

A vector $\textbf{v}\in S$ is called an {\em extremal} in $S$ if for all $\textbf{x}, \textbf{y}\in S$, $\textbf{v}=\textbf{x}\oplus \textbf{y}$ implies $\textbf{v}=\textbf{x}$ or $\textbf{v}=\textbf{y}$. A vector $\textbf{v}=(v_{1}, \cdots, v_{n})^{T}\in S$ ($\textbf{v}\neq -\infty$)is called {\em scaled} if $\|\textbf{v}\|=0$, where $\|\textbf{v}\|=\max_{i=1}^{n}v_{i}$.
The set $S$ is called scaled if all its elements are scaled. For vector $\textbf{v}=(v_{1}, \cdots, v_{n})^{T}\in \overline{\mathbb{R}}^{n}$ the support of $\textbf{v}$ is defined by
$$\text{Supp}(\textbf{v})=\{j\in N: v_{j}\in \mathbb{R}\}.$$

\if{
Given $A\in \overline{\mathbb{R}}^{n\times n}$, the symbol
$\lambda(A)$ will stand for the maximum cycle mean of $A$, that is,
\begin{eqnarray}\label{Eq2.3}
\lambda(A)=\mathop{\mbox{max}}_{\sigma}\mu(\sigma, A),
\end{eqnarray}
where the maximization is taken over all elementary cycles in
$D_{A}$, and
\begin{eqnarray}\label{Eq2.4}
\mu(\sigma, A)=\frac{\omega(\sigma, A)}{l(\sigma)}
\end{eqnarray}
denotes the mean of a cycle $\sigma$. Clearly, $\lambda(A)$ always
exists since the number of elementary cycles is finite.
}\fi

The following result explains the importance of extremals for subspaces of $\overline{\mathbb{R}}^n$.

\begin{proposition}[\bf Wagneur~\cite{Wagneur1991}, see also \cite{Peter2010}]
\label{Pro0.1}
If $T$ is a finitely generated subspace then its set of scaled extremals is nonempty and it is the unique scaled basis for $T$.
\end{proposition}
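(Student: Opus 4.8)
The plan is to prove the three assertions of Proposition~\ref{Pro0.1} in the order: (i) the set of scaled extremals is nonempty; (ii) it generates $T$; (iii) it is independent; and finally observe uniqueness. Throughout I would work with scaled vectors, using the normalization $\|\textbf{v}\|=0$ as a section of the scaling action, so that $\textbf{x}\oplus\textbf{y}$ of scaled vectors is again scaled, and every nonzero vector of $T$ has a unique scaled representative.

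First, for nonemptiness and the generating property simultaneously, I would take a finite scaled generating set $G=\{\textbf{g}^{(1)},\dots,\textbf{g}^{(m)}\}$ of $T$ (which exists by hypothesis) and argue that every extremal of $T$ must be proportional to some $\textbf{g}^{(k)}$: indeed if $\textbf{v}$ is extremal and $\textbf{v}=\bigoplus_k \alpha_k\otimes\textbf{g}^{(k)}$, then extremality forces $\textbf{v}=\alpha_k\otimes\textbf{g}^{(k)}$ for some $k$, so there are only finitely many scaled extremals. Conversely I would run a reduction: starting from $G$, repeatedly discard any $\textbf{g}^{(k)}$ that is a max-combination of the others; since $G$ is finite this terminates at an independent scaled generating set $S$. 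Then I would show each remaining $\textbf{s}\in S$ is extremal in $T$: if $\textbf{s}=\textbf{x}\oplus\textbf{y}$ with $\textbf{x},\textbf{y}\in T$, write $\textbf{x}$ and $\textbf{y}$ as max-combinations of $S$; substituting gives $\textbf{s}$ as a max-combination of $S$ in which, by independence, the coefficient of $\textbf{s}$ itself must be $0$ and all others effectively $-\infty$, which upon unwinding yields $\textbf{x}\geq\textbf{s}$ or $\textbf{y}\geq\textbf{s}$, hence $\textbf{x}=\textbf{s}$ or $\textbf{y}=\textbf{s}$. This shows $S$ consists of scaled extremals, so the set of scaled extremals is nonempty and (being a superset of the generating $S$, yet contained in the proportionality classes of $S$) equals $S$ up to scaling; in particular it generates $T$.

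For independence of the full set $E$ of scaled extremals: I would suppose some $\textbf{v}\in E$ equals a max-combination $\bigoplus_{\textbf{w}\in E\setminus\{\textbf{v}\}}\beta_{\textbf{w}}\otimes\textbf{w}$. Grouping terms by index $i$ where the maximum defining $v_i$ is attained, and using the finiteness just established, one extracts two vectors whose max is $\textbf{v}$ but neither equals $\textbf{v}$ (since each is a genuine max-combination of vectors strictly different from $\textbf{v}$ in the scaled normalization), contradicting extremality. Hence $E$ is independent, so it is a scaled basis. Uniqueness follows because any scaled basis $B$ consists of extremals — each $\textbf{b}\in B$, if written as $\textbf{x}\oplus\textbf{y}$ in $T$, expands via $B$ and independence forces $\textbf{b}=\textbf{x}$ or $\textbf{b}=\textbf{y}$ — so $B\subseteq E$; and since $B$ generates while $E$ is independent, $B=E$.

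The main obstacle I anticipate is the careful bookkeeping in the step "every extremal is proportional to a generator" and its converse combination with the reduction argument: one must be precise that the coefficients $\alpha_k$ are genuinely real (not $-\infty$) on the relevant support, and that the normalization-to-scaled maps behave well under $\oplus$. A secondary subtlety is ruling out degenerate situations where several generators are mutually proportional — handled by passing to scaled representatives at the outset — and making sure the "discard dependent generator" loop cannot remove a vector that is in fact needed, which is exactly the definition of weak dependence and is safe by construction. None of these steps is deep, but the argument that ties finiteness, extremality, and minimal generation together is where the real content lies.
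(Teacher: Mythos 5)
Note first that the paper does not prove Proposition~\ref{Pro0.1}: it is quoted from Wagneur~\cite{Wagneur1991} and \cite{Peter2010}, so there is no in-paper argument to compare against. Your proposal reconstructs what is essentially the standard proof from those sources (reduce a finite scaled generating set to a weakly independent one, show its members are extremal, show every extremal is forced, up to scaling, into any generating set), and the overall architecture is sound. One step is stated imprecisely, though: when you write $\textbf{s}=\textbf{x}\oplus\textbf{y}$ as a max-combination $\bigoplus_{\textbf{t}\in S}\gamma_{\textbf{t}}\otimes\textbf{t}$, independence does \emph{not} force ``all others effectively $-\infty$''; the remaining terms merely have to be dominated by $\textbf{s}$. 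What you actually need is only that $\gamma_{\textbf{s}}=0$, and even that requires an argument: independence rules out $\gamma_{\textbf{s}}=-\infty$, and scaledness gives $\gamma_{\textbf{s}}\leq 0$, but to exclude $\gamma_{\textbf{s}}<0$ you must iterate the identity $\textbf{s}=\gamma_{\textbf{s}}\otimes\textbf{s}\oplus\textbf{r}$ (with $\textbf{r}$ the combination of the other generators) to obtain $\textbf{s}=(k\gamma_{\textbf{s}})\otimes\textbf{s}\oplus\textbf{r}$ for all $k\geq 1$, hence $\textbf{s}=\textbf{r}$, contradicting independence. Once $\gamma_{\textbf{s}}=\alpha_{\textbf{s}}\oplus\beta_{\textbf{s}}=0$ is secured, your conclusion that $\textbf{x}\geq\textbf{s}$ or $\textbf{y}\geq\textbf{s}$ follows, and the rest of the proof (independence of the set of scaled extremals via the finite-decomposition induction, and uniqueness of the scaled basis) goes through as you describe.
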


By Proposition \ref{Pro0.1}, the set of scaled extremals in $\mathscr{X}'$ is the unique scaled basis for $\mathscr{X}'$. Therefore it is important to judge whether a solution (or a generator) in the solution space of the system (\ref{EqSupnolam}) is an extremal 
in $\mathscr{X}'$. 

Using the definition of extremal we readily obtain:

\begin{proposition}
\label{Th1.1}
Let $\emph{\textbf{x}}=(x_1, x_2, \cdots, x_n)^T\in\mathscr{X}$. If $|\emph{\text{Supp}}(\emph{\textbf{x}})|=1$, then $\emph{\textbf{x}}$ is an extremal in $\mathscr{X}'$.
\end{proposition}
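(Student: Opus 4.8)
The plan is to argue directly from the definition of extremal. Suppose $\textbf{x} \in \mathscr{X}$ has a single-element support, say $\text{Supp}(\textbf{x}) = \{k\}$, so that $x_k \in \mathbb{R}$ and $x_j = -\infty$ for all $j \neq k$. I would take any decomposition $\textbf{x} = \textbf{y} \oplus \textbf{z}$ with $\textbf{y}, \textbf{z} \in \mathscr{X}'$, and aim to show that $\textbf{y} = \textbf{x}$ or $\textbf{z} = \textbf{x}$. Note that the definition of extremal in the excerpt requires $\textbf{x}, \textbf{y} \in S$ with $S = \mathscr{X}'$; since $\textbf{x} \neq -\infty$, at least one of $\textbf{y}, \textbf{z}$ is nonzero, hence lies in $\mathscr{X}$.

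The key observation is that $\textbf{x} = \textbf{y} \oplus \textbf{z}$ forces, componentwise, $y_j \oplus z_j = x_j = -\infty$ for every $j \neq k$, which in the idempotent semiring $\overline{\mathbb{R}}$ means $y_j = z_j = -\infty$ for all $j \neq k$. Therefore both $\textbf{y}$ and $\textbf{z}$ are supported (at most) on $\{k\}$: we have $\textbf{y} = y_k \otimes \textbf{e}$ and $\textbf{z} = z_k \otimes \textbf{e}$ where $\textbf{e}$ is the $k$-th standard unit vector (with $0$ in position $k$ and $-\infty$ elsewhere), and likewise $\textbf{x} = x_k \otimes \textbf{e}$. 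Then $x_k = y_k \oplus z_k = \max(y_k, z_k)$, so $x_k = y_k$ or $x_k = z_k$, and correspondingly $\textbf{x} = \textbf{y}$ or $\textbf{x} = \textbf{z}$. This establishes extremality. (One should also handle the degenerate sub-case where one of $\textbf{y}, \textbf{z}$ equals $-\infty$, but that is immediate: then the other equals $\textbf{x}$.)

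There is essentially no obstacle here — the statement is elementary and follows purely from idempotency of $\oplus$ together with the support condition. The only point requiring a small amount of care is the bookkeeping around the zero vector: the decomposition $\textbf{x} = \textbf{y} \oplus \textbf{z}$ is taken in $\mathscr{X}'$, not $\mathscr{X}$, so one of the summands might be $-\infty$; but the argument above covers this uniformly, since $-\infty$ is also "supported on $\{k\}$" in the trivial sense and $\max(y_k, z_k)$ still picks out one of the two. I do not expect to need Proposition~\ref{Pro0.1} or any property specific to the matrix $A$ or the set $\mathscr{X}$ beyond the fact that $\textbf{x}$ belongs to it and is nonzero; in fact the same proof shows any vector with one-element support in any subspace is extremal.
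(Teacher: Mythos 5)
Your argument is correct and is exactly the elementary verification the paper has in mind: the authors state this proposition with only the remark that it follows ``readily from the definition of extremal,'' and your componentwise reasoning (idempotency of $\oplus$ forcing $y_j=z_j=-\infty$ off the support, then $x_k=\max(y_k,z_k)$ selecting one summand) is the intended justification. No gap; nothing further needed.
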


In what follows we only need to discuss the extremality of a solution $\textbf{x}$ with $|\text{Supp}(\textbf{x})|\geq 2$.

For any $\textbf{x}=(x_1, x_2, \cdots, x_n)^T\in\mathscr{X}$, take any $i\in \text{Supp}(\textbf{x})$ and denote
\begin{align*}
&V_{\textbf{x}}(i)=\{j\in N: a_{ij}+x_{j}=\max\limits_{k\in N}(a_{ik}+x_{k})=x_{i}\}.
\end{align*}

The following notion can be seen as very closely related to the tangent directed hypergraph introduced in \cite{Allamigeon2013},
and therefore we use a similar terminology here.

\begin{definition}
The {\rm tangent digraph} $\mathcal{D}_{\emph{\textbf{x}}}=(\text{Supp}(\emph{\textbf{x}}), E_{\emph{\textbf{x}}})$ at
$\emph{\textbf{x}}$ is defined as the digraph with the node set $\text{Supp}(\emph{\textbf{x})}$ and arc set
$E_{\emph{\textbf{x}}}=\{(j, i): i\in \text{Supp}(\emph{\textbf{x}}) \mbox{ and } j\in V_{\emph{\textbf{x}}}(i)\}$.
\end{definition}

In $\mathcal{D}_{\textbf{x}}$, the arc $(j, i)$ is called an incoming arc of node $i$ from node $j$ or outgoing arc of node $j$ to node $i$. The node $j$ is called the starting node and $i$ the endnode of $(j, i)\in E_{\textbf{x}}$, respectively. Specially if $(i, i)\in E_{\textbf{x}}$ for node $i\in \text{Supp}(\textbf{x})$, then arc $(i, i)$ is both incoming arc and outgoing arc of node $i$. 

\section{Extremality test}
\label{s:extre}

Theorem 1 of Allamigeon et al.~\cite{Allamigeon2013} implies that $\textbf{x}$ is an extremal if and only if there is a strongly connected component in the tangent directed hypergraph at $\textbf{x}$ which can be accessed from any node in $\text{Supp}(\textbf{x})$ via a certain hypergraph access relation. We will be aiming at alternative criteria to characterize extremality, which will not need hypergraphs and access relations on them.  Although the criteria which we are going to obtain are equivalent to \cite[Theorem 1]{Allamigeon2013}, we will prefer to obtain them in an elementary way, rather than deducing them from the result of \cite{Allamigeon2013}.

\subsection{Variable nodes and extremality}

The following definitions, given in terms of the tangent digraph  $\mathcal{D}_{\textbf{x}}$ at $\textbf{x}$, where
$\textbf{x}=(x_1, x_2, \cdots, x_n)^T\in\mathscr{X}$, will be key to our characterization.

\begin{definition}
\label{De1.1}
Let $\emph{\textbf{x}}=(x_1, x_2, \cdots, x_n)^T\in\mathscr{X}$ and $i\in \emph{\text{Supp}}(\emph{\textbf{x}})$. If there is no outgoing arc other than possibly $(i, i)$  in $\mathcal{D}_{\emph{\textbf{x}}}$ for node $i$, then $i$ is called a {\rm \uppercase\expandafter{\romannumeral1}-variable node.}
\end{definition}

\begin{definition}
\label{De1.2}
Let $\emph{\textbf{x}}=(x_1, x_2, \cdots, x_n)^T\in\mathscr{X}$ and $i\in \emph{\text{Supp}}(\emph{\textbf{x}})$. If the endnode of any outgoing arc from $i$ except $(i, i)$ (if it exists) has another incoming arc in $\mathcal{D}_{\emph{\textbf{x}}}$, then $i$ is called a {\rm \uppercase\expandafter{\romannumeral2}-variable node.}
\end{definition}

\begin{definition}
\label{De1.3}
Let $\emph{\textbf{x}}=(x_1, x_2, \cdots, x_n)^T\in\mathscr{X}$ and $i\in \emph{\text{Supp}}(\emph{\textbf{x}})$. If there exists a node $j$, $j\neq i$ such that $(i, j)\in E_{\emph{\textbf{x}}}$ and $j$ has no other incoming arc in $\mathcal{D}_{\emph{\textbf{x}}}$, then $i$ is called an {\rm invariable node.}
\end{definition}

A node will be called {\em variable} if it is \uppercase\expandafter{\romannumeral1}-variable or \uppercase\expandafter{\romannumeral2}-variable. The proof of the following statement is straightforward.

\begin{lemma}
\label{Lem1}
Let $\emph{\textbf{x}}=(x_1, x_2, \cdots, x_n)^T\in\mathscr{X}$. Then each node in $\emph{\text{Supp}}(\emph{\textbf{x}})$ is either a \uppercase\expandafter{\romannumeral1}-variable node or a \uppercase\expandafter{\romannumeral2}-variable node or an invariable node.
\end{lemma}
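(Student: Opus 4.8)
The plan is to show that the three definitions exhaust all possibilities by analysing, for a fixed node $i \in \text{Supp}(\textbf{x})$, its outgoing arcs in $\mathcal{D}_{\textbf{x}}$ other than the possible loop $(i,i)$. First I would dispose of the trivial case: if $i$ has no outgoing arc other than possibly $(i,i)$, then $i$ is a \uppercase\expandafter{\romannumeral1}-variable node by Definition~\ref{De1.1}, and we are done. So assume $i$ has at least one outgoing arc $(i,j)$ with $j \neq i$.

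Now I would split according to a single property: whether \emph{every} such arc $(i,j)$ (with $j\neq i$) has the property that its endnode $j$ carries another incoming arc in $\mathcal{D}_{\textbf{x}}$ besides $(i,j)$. If yes, then $i$ is a \uppercase\expandafter{\romannumeral2}-variable node directly from Definition~\ref{De1.2}. If no, then there is at least one outgoing arc $(i,j)$ with $j \neq i$ whose endnode $j$ has no incoming arc other than $(i,j)$; this is exactly the defining condition of an invariable node in Definition~\ref{De1.3}, so $i$ is invariable. Since these two subcases are the negation of each other and together cover the case ``$i$ has an outgoing arc to some $j\neq i$'', every node falls into (at least) one of the three classes.

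The one point that deserves a line of care, rather than being genuinely hard, is the interaction of the loop $(i,i)$ with the definitions: all three definitions explicitly exempt the arc $(i,i)$ (Definition~\ref{De1.1} says ``other than possibly $(i,i)$'', Definitions~\ref{De1.2} and \ref{De1.3} say ``except $(i,i)$'' and ``$j\neq i$''), so the presence or absence of a loop at $i$ never affects the classification, and the case split above is genuinely on the set of outgoing arcs of $i$ with distinct endnode. There is no real obstacle here: the statement is essentially a tautological trichotomy once one observes that ``$i$ is \uppercase\expandafter{\romannumeral2}-variable'' and ``$i$ is invariable'' are, in the presence of a non-loop outgoing arc, complementary conditions, and ``$i$ is \uppercase\expandafter{\romannumeral1}-variable'' covers the remaining case. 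Note that the classes need not be mutually exclusive (a node could be both \uppercase\expandafter{\romannumeral1}-variable and \uppercase\expandafter{\romannumeral2}-variable if it has no non-loop outgoing arc at all, vacuously satisfying Definition~\ref{De1.2}), but the lemma only asserts that the union is everything, so this causes no difficulty.
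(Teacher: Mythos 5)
Your proof is correct: the paper itself omits the argument, stating only that ``the proof of the following statement is straightforward,'' and your case analysis (no non-loop outgoing arc gives a \uppercase\expandafter{\romannumeral1}-variable node; otherwise \uppercase\expandafter{\romannumeral2}-variable and invariable are complementary conditions on the non-loop outgoing arcs) is exactly the straightforward argument intended. Your side remark that the classes need not be mutually exclusive is accurate and harmless, since the lemma only claims the union covers all nodes.
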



Let $\textbf{x}=(x_1, x_2, \cdots, x_n)^T\in\mathscr{X}$. The following proposition enables us to construct $\textbf{x}'\in\mathscr{X}$ such that $\textbf{x}'\leq\textbf{x}$ and $\textbf{x}'\neq\textbf{x}$.

\begin{proposition}
\label{Pro1.1}
Let $\emph{\textbf{x}}=(x_1, x_2, \cdots, x_n)^T\in\mathscr{X}$ and $i\in \emph{\text{Supp}}(\emph{\textbf{x}})$ be a variable node. Then there exists $\emph{\textbf{x}}'=(x_1', x_2', \cdots, x_n')$ with $x_{i}'<x_{i}$ and $x_{j}'=x_{j}$ for any $j\neq i, j\in N$ such that $\emph{\textbf{x}}'\in\mathscr{X}$.
\end{proposition}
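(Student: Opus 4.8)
The plan is to produce $\textbf{x}'$ by decreasing only the $i$-th coordinate of $\textbf{x}$ by a small positive amount. Since $i\in\text{Supp}(\textbf{x})$, the number $x_i$ is finite, so for $\varepsilon>0$ I would set $x_i^{(\varepsilon)}=x_i-\varepsilon$ and $x_j^{(\varepsilon)}=x_j$ for $j\neq i$, obtaining a vector $\textbf{x}^{(\varepsilon)}$ with $\text{Supp}(\textbf{x}^{(\varepsilon)})=\text{Supp}(\textbf{x})\neq\emptyset$, hence $\textbf{x}^{(\varepsilon)}\neq-\infty$. It then suffices to show $A\otimes\textbf{x}^{(\varepsilon)}\geq\textbf{x}^{(\varepsilon)}$ for all sufficiently small $\varepsilon>0$ and take $\textbf{x}'=\textbf{x}^{(\varepsilon)}$ for one such $\varepsilon$. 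Only the $i$-th coordinate drops, and in the inequality $A_j\otimes\textbf{x}\geq x_j$ only the summand $a_{ji}+x_i$ is affected, so the argument reduces to checking the $n$ inequalities one by one.

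Inequality $i$ causes no trouble for any $\varepsilon>0$: since $x_k^{(\varepsilon)}\geq x_k-\varepsilon$ for all $k$, we get $(A\otimes\textbf{x}^{(\varepsilon)})_i\geq(A\otimes\textbf{x})_i-\varepsilon\geq x_i-\varepsilon=x_i^{(\varepsilon)}$. For $j\neq i$, put $M_j=\max_{k\neq i}(a_{jk}+x_k)\in\overline{\mathbb{R}}$, so that $(A\otimes\textbf{x}^{(\varepsilon)})_j=\max\bigl(a_{ji}+x_i-\varepsilon,\,M_j\bigr)$ while the right-hand side $x_j$ is unchanged. If $M_j\geq x_j$ the inequality holds for every $\varepsilon>0$. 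If $M_j<x_j$, then $A_j\otimes\textbf{x}\geq x_j$ forces $a_{ji}+x_i\geq x_j$, and when $a_{ji}+x_i>x_j$ the inequality survives for all $0<\varepsilon\leq(a_{ji}+x_i)-x_j$. The only obstruction is thus the case $M_j<x_j$ together with $a_{ji}+x_i=x_j$; this case means precisely that $\max_k(a_{jk}+x_k)=x_j$ is attained at $k=i$ and at no other index, i.e.\ that $\mathcal{D}_{\textbf{x}}$ contains the arc $(i,j)$ with $j\neq i$ and $j$ has no incoming arc other than $(i,j)$.

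It remains to rule out this obstruction using the hypothesis that $i$ is variable. If $i$ is a \uppercase\expandafter{\romannumeral1}-variable node, there is no arc $(i,j)$ with $j\neq i$ in $\mathcal{D}_{\textbf{x}}$, so the obstruction cannot arise. If $i$ is a \uppercase\expandafter{\romannumeral2}-variable node, the endnode $j\neq i$ of any such arc $(i,j)$ has a further incoming arc $(k,j)$ with $k\neq i$; this gives $a_{jk}+x_k=x_j$ with $k\neq i$, hence $M_j\geq x_j$, again contradicting the obstruction. Consequently, for each of the finitely many indices $j$ there is a positive bound below which inequality $j$ is preserved, and any $\varepsilon>0$ smaller than all these bounds yields $\textbf{x}'=\textbf{x}^{(\varepsilon)}\in\mathscr{X}$ with $x_i'<x_i$ and $x_j'=x_j$ for $j\neq i$.

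The routine part is the bookkeeping with $-\infty$ entries, which is handled automatically since $a_{ji}$ and $M_j$ are allowed to take the value $-\infty$ (then the inequality in question is either unaffected or reduces to one of the displayed comparisons). The step I would be most careful with is the identification, in the second paragraph, of the analytic obstruction ``$M_j<x_j$ and $a_{ji}+x_i=x_j$'' with the purely combinatorial condition ``$(i,j)\in E_{\textbf{x}}$, $j\neq i$, and $j$ has a unique incoming arc in $\mathcal{D}_{\textbf{x}}$,'' because it is exactly this translation that makes Definitions~\ref{De1.1} and~\ref{De1.2} applicable and lets the proof terminate.
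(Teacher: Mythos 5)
Your proof is correct and follows essentially the same route as the paper's: decrease only the $i$-th coordinate by a sufficiently small amount (your $\varepsilon$-bound $\varepsilon\leq(a_{ji}+x_i)-x_j$ is exactly the paper's condition $x_i'\geq\max_{j\in J}\{x_j-a_{ji}\}$), verify the row inequalities one by one, and use \uppercase\expandafter{\romannumeral1}- or \uppercase\expandafter{\romannumeral2}-variability of $i$ to dispose of the only critical case, namely a row $j$ that is tight and supported solely at index $i$ (the paper's case (6)). Your organization around the single ``obstruction'' $M_j<x_j$ and $a_{ji}+x_i=x_j$ is a slightly cleaner packaging of the same case analysis, and your handling of the $i$-th row and of $-\infty$ entries is sound.
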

\begin{proof}
Denote
\begin{equation*}
\begin{split}
J=\{j\in \text{Supp}(\textbf{x}) \text{ and } j\neq i:\; & a_{ji}+x_{i}>x_{j}, a_{js}+x_{s}<x_{j}\\ 
                                               &\text{ for any } s\neq i, s\in \text{Supp}(\textbf{x})\}.
\end{split}
\end{equation*}
Construct a vector $\textbf{x}'=(x_1', x_2', \cdots, x_n')$ with $x_{j}'=x_{j}$ for any $j\neq i, j\in N$ and $x_{i}'$ satisfying $x_{i}'<x_{i}$ when $J=\emptyset$ or $\max\limits_{j\in J}\{x_{j}-a_{ji}\}\leq x_{i}'<x_{i}$ when $J\neq\emptyset$. We claim that $\textbf{x}'\in\mathscr{X}$, that is, every inequality of the system $A\otimes\textbf{x}\geq\textbf{x}$ holds for $\textbf{x}'$.



Let us first consider the $i$-th inequality. Note that $x_{i}'<x_{i}$ both when $J=\emptyset$ and when $J\neq\emptyset$.
There are two possible cases to consider:
\begin{itemize}
\item[(1)] If $a_{ii}<0$, then there exists $s\neq i, s\in \text{Supp}(\textbf{x})$ such that $a_{is}+x_{s}\geq x_{i}$ since $\textbf{x}\in\mathscr{X}$. Then $A_{i}\otimes\textbf{x}'\geq a_{is}+x_{s}'=a_{is}+x_{s}\geq x_{i}> x_{i}'$, hence
$A_{i}\otimes\textbf{x}'\geq x_{i}'$, as required. 
\item[(2)] If $a_{ii}\geq 0$, then obviously $A_{i}\otimes\textbf{x}'\geq a_{ii}+x_{i}'\geq x_{i}'$. 
\end{itemize}

Next, we shall discuss the $j$-th inequality for all $j\neq i$ and $j\in N$.
We will have to consider the following cases:\\
\begin{itemize}
\item[(1)] If $j\notin \text{Supp}(\textbf{x})$, then obviously $A_j\otimes \textbf{x}'\geq x_{j}'=-\infty$.
\item[(2)] If $j\in \text{Supp}(\textbf{x})$ and $a_{ji}+x_{i}<x_{j}$ then there exists $s\neq i, s\in \text{Supp}(\textbf{x})$ such that $a_{js}+x_{s}\geq x_{j}$ since $\textbf{x}\in\mathscr{X}$. Then $A_{j}\otimes\textbf{x}'\geq a_{js}+x_{s}'=a_{js}+x_{s}\geq x_{j}=x_{j}'$, thus  $A_{j}\otimes\textbf{x}'\geq x_{j}'$.
\item[(3)] If $j\in \text{Supp}(\textbf{x})$, $a_{ji}+x_{i}>x_{j}$ and $j\notin J$, then $a_{js}+x_{s}\geq x_{j}$ for some $s\neq i, s\in \text{Supp}(\textbf{x})$. Then we obtain $A_{j}\otimes\textbf{x}'\geq a_{js}+x_{s}'=a_{js}+x_{s}\geq x_{j}=x_{j}'$, thus
$A_j\otimes\textbf{x}'\geq x_{j}'$.
\item[(4)] If $j\in\text{Supp}(\textbf{x})$, $a_{ji}+x_{i}>x_{j}$ and $j\in J$ , then $a_{js}+x_{s}<x_{j}$ for any $s\neq i, s\in \text{Supp}(\textbf{x})$. Then we obtain $A_{j}\otimes\textbf{x}'\geq a_{ji}+x_{i}'\geq a_{ji}+\max\limits_{j\in J}\{x_{j}-a_{ji}\}\geq a_{ji}+x_{j}-a_{ji}=x_{j}=x_{j}'$, thus again  $A_j\otimes \textbf{x}'\geq x'_j$.
\item[(5)] If $j\in \text{Supp}(\textbf{x})$, $a_{ji}+x_{i}=x_{j}$ and  $(i,j)\notin E_{\textbf{x}}$, then there exists $s\neq i, s\in\text{Supp}(\textbf{x})$ such that  $a_{js}+x_{s}>x_{j}$,  so we obtain $A_{j}\otimes\textbf{x}'\geq a_{js}+x_{s}'=a_{js}+x_{s}> x_{j}=x_{j}'$,
thus $A_j\otimes\textbf{x}'\geq x_{j}'$.
\item[(6)] If $j\in \text{Supp}(\textbf{x})$, $a_{ji}+x_{i}=x_{j}$ and $(i,j)\in E_{\textbf{x}},$ then $i$ is a II-variable node, which implies that there is $s\neq i$ with $(s,j)\in E_{\textbf{x}}$, meaning that $a_{js}+x_s=x_j$ and $a_{jk}+x_k\leq x_j$ for all $k\neq i,s$.
Then we have  $A_{j}\otimes\textbf{x}'\geq a_{js}+x_{s}'=a_{js}+x_{s}=x_{j}=x_{j}'$, thus $A_{j}\otimes\textbf{x}'\geq x'_j$.
\end{itemize}


\end{proof}

Denote $\mathscr{X}^{\leq\textbf{x}}=\{\textbf{x}'\in\mathscr{X}: \textbf{x}'\leq\textbf{x}, \textbf{x}'\neq \textbf{x}\}$. If $\textbf{x}$ has at least one finite component, then $\mathscr{X}^{\leq\textbf{x}}\neq\emptyset$, since $a\otimes\textbf{x}\in\mathscr{X}^{\leq\textbf{x}}$ for any $a\in \mathbb{R}$ and $a<0$. Then the following two simple but useful lemmas can be straightforwardly deduced from the definition of extremal.

\begin{lemma}
\label{Lem2}
Let $\emph{\textbf{x}}=(x_1, x_2, \cdots, x_n)^T\in\mathscr{X}$. If $\emph{\textbf{x}}\neq\emph{\textbf{x}}^{1}\oplus\emph{\textbf{x}}^{2}$ for any $\emph{\textbf{x}}^{1}, \emph{\textbf{x}}^{2}\in\mathscr{X}^{\leq\emph{\textbf{x}}}$, then
$\emph{\textbf{x}}$ is an extremal in $\mathscr{X}'$.
\end{lemma}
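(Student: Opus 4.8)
The statement is essentially the contrapositive of the definition of extremal, combined with the observation that any nontrivial decomposition can be reduced to one using vectors strictly below $\textbf{x}$. First I would recall that, by Proposition~\ref{Th1.1}, the only interesting case is $|\text{Supp}(\textbf{x})|\geq 2$, but in fact the argument below works uniformly. The plan is to prove the contrapositive: assume $\textbf{x}$ is \emph{not} an extremal in $\mathscr{X}'$ and produce $\textbf{x}^{1},\textbf{x}^{2}\in\mathscr{X}^{\leq\textbf{x}}$ with $\textbf{x}=\textbf{x}^{1}\oplus\textbf{x}^{2}$.

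Since $\textbf{x}\in\mathscr{X}$ we have $\textbf{x}\neq-\infty$, so $\textbf{x}\neq-\infty$ is a genuine element of $\mathscr{X}'$. By the definition of extremal applied in $\mathscr{X}'$, non-extremality means there exist $\textbf{y}^{1},\textbf{y}^{2}\in\mathscr{X}'$ with $\textbf{x}=\textbf{y}^{1}\oplus\textbf{y}^{2}$ but $\textbf{x}\neq\textbf{y}^{1}$ and $\textbf{x}\neq\textbf{y}^{2}$. From $\textbf{x}=\textbf{y}^{1}\oplus\textbf{y}^{2}$ we get $\textbf{y}^{k}\leq\textbf{x}$ for $k=1,2$, and combined with $\textbf{y}^{k}\neq\textbf{x}$ this gives $\textbf{y}^{k}\leq\textbf{x}$, $\textbf{y}^{k}\neq\textbf{x}$. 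The only remaining gap is that a priori one of the $\textbf{y}^{k}$ could equal $-\infty$, so it need not lie in $\mathscr{X}^{\leq\textbf{x}}$ (which is a subset of $\mathscr{X}$, excluding $-\infty$). But if, say, $\textbf{y}^{2}=-\infty$, then $\textbf{x}=\textbf{y}^{1}$, contradicting $\textbf{x}\neq\textbf{y}^{1}$; hence both $\textbf{y}^{1}$ and $\textbf{y}^{2}$ are nonzero, i.e. they lie in $\mathscr{X}$, and therefore in $\mathscr{X}^{\leq\textbf{x}}$. Setting $\textbf{x}^{k}=\textbf{y}^{k}$ finishes the contrapositive.

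The only point that requires a little care, and which I expect to be the main (though minor) obstacle, is precisely this handling of the $-\infty$ vector: the decomposition in the definition of extremal is taken over all of $\mathscr{X}'$ (which contains $-\infty$), whereas $\mathscr{X}^{\leq\textbf{x}}$ by definition excludes $-\infty$, so one must explicitly rule out the degenerate case $\textbf{y}^{k}=-\infty$. As shown above this is immediate, because $\textbf{y}^{1}\oplus(-\infty)=\textbf{y}^{1}$ forces $\textbf{x}=\textbf{y}^{1}$. With that observation in hand the lemma follows directly by negating the hypothesis and invoking the definitions; no computation with the matrix $A$ or the tangent digraph is needed here, which is why the paper rightly calls it ``simple.''
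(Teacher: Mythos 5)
Your proof is correct and is exactly the ``straightforward deduction from the definition of extremal'' that the paper invokes without writing out: the contrapositive plus the observation that neither summand in a nontrivial decomposition can be $-\infty$ (since $\textbf{y}^{1}\oplus(-\infty)=\textbf{y}^{1}$ would force $\textbf{x}=\textbf{y}^{1}$), so both summands land in $\mathscr{X}^{\leq\textbf{x}}$. Nothing is missing.
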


\begin{lemma}
\label{Lem3}
Let $\emph{\textbf{x}}=(x_1, x_2, \cdots, x_n)^T\in\mathscr{X}$. If $\emph{\textbf{x}}=\emph{\textbf{x}}^{1}\oplus\emph{\textbf{x}}^{2}$ for some $\emph{\textbf{x}}^{1}, \emph{\textbf{x}}^{2}\in\mathscr{X}^{\leq\emph{\textbf{x}}}$, then
$\emph{\textbf{x}}$ is not an extremal in $\mathscr{X}'$.
\end{lemma}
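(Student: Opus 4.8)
The plan is to apply the definition of an extremal directly, with the decomposition $\textbf{x}=\textbf{x}^{1}\oplus\textbf{x}^{2}$ supplied by the hypothesis serving as the witnessing nontrivial decomposition. In other words, to show that $\textbf{x}$ fails to be an extremal in $\mathscr{X}'$ it suffices to exhibit two vectors in $\mathscr{X}'$ whose max-sum is $\textbf{x}$ yet neither of which equals $\textbf{x}$, and such a pair is exactly what the statement hands us.

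First I would record the inclusions $\mathscr{X}^{\leq\textbf{x}}\subseteq\mathscr{X}\subseteq\mathscr{X}'$, so that $\textbf{x}^{1},\textbf{x}^{2}\in\mathscr{X}'$; since $\textbf{x}\in\mathscr{X}\subseteq\mathscr{X}'$ as well, all three vectors lie in the subspace $S=\mathscr{X}'$ in which extremality is being tested. Next I would observe that, by the very definition of $\mathscr{X}^{\leq\textbf{x}}$, we have $\textbf{x}^{1}\neq\textbf{x}$ and $\textbf{x}^{2}\neq\textbf{x}$. Hence the identity $\textbf{x}=\textbf{x}^{1}\oplus\textbf{x}^{2}$ exhibits $\textbf{x}$ as a max-combination of two elements of $\mathscr{X}'$, neither of which equals $\textbf{x}$. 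Comparing this with the definition of an extremal in $\mathscr{X}'$ — which would require that $\textbf{x}=\textbf{x}^{1}$ or $\textbf{x}=\textbf{x}^{2}$ for every decomposition of $\textbf{x}$ into a max-sum of two elements of $\mathscr{X}'$ — we conclude that $\textbf{x}$ is not an extremal in $\mathscr{X}'$.

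There is essentially no obstacle here: the claim is a one-line consequence of unwinding the definitions of \emph{extremal} and of $\mathscr{X}^{\leq\textbf{x}}$, which is precisely why it can be ``straightforwardly deduced.'' The only point that merits a moment's care is the bookkeeping that the two comparison vectors genuinely belong to $\mathscr{X}'$ and are genuinely distinct from $\textbf{x}$; both facts are built into the hypothesis $\textbf{x}^{1},\textbf{x}^{2}\in\mathscr{X}^{\leq\textbf{x}}$, so no further work is needed.
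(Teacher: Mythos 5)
Your proposal is correct and is exactly the ``straightforward deduction from the definition of extremal'' that the paper invokes without writing out: the hypothesis supplies the witnessing decomposition, and the definition of $\mathscr{X}^{\leq\textbf{x}}$ guarantees $\textbf{x}^{1}\neq\textbf{x}$ and $\textbf{x}^{2}\neq\textbf{x}$ while the inclusion $\mathscr{X}^{\leq\textbf{x}}\subseteq\mathscr{X}\subseteq\mathscr{X}'$ places both vectors in the ambient set. Nothing is missing.
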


Then by Proposition \ref{Pro1.1} we can get a useful statement:

\begin{corollary}
\label{Cor:cond3}
Let $\emph{\textbf{x}}=(x_1, x_2, \cdots, x_n)^T\in\mathscr{X}$. If there are at least two variable nodes in $\mathcal{D}_{\emph{\textbf{x}}}$, then $\emph{\textbf{x}}$ is not an extremal.
\end{corollary}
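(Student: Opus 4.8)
The plan is to combine Proposition~\ref{Pro1.1} with Lemma~\ref{Lem3}. Suppose $i_1$ and $i_2$ are two distinct variable nodes in $\mathcal{D}_{\textbf{x}}$. By Proposition~\ref{Pro1.1} applied to $i_1$, there is a vector $\textbf{x}^{1}=(x_1^1,\dots,x_n^1)^T\in\mathscr{X}$ with $x_{i_1}^1<x_{i_1}$ and $x_j^1=x_j$ for all $j\neq i_1$; in particular $\textbf{x}^1\in\mathscr{X}^{\leq\textbf{x}}$. Similarly, applying Proposition~\ref{Pro1.1} to $i_2$ yields $\textbf{x}^{2}\in\mathscr{X}^{\leq\textbf{x}}$ with $x_{i_2}^2<x_{i_2}$ and $x_j^2=x_j$ for all $j\neq i_2$.

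Now I would check that $\textbf{x}=\textbf{x}^{1}\oplus\textbf{x}^{2}$ componentwise. For any coordinate $j\notin\{i_1,i_2\}$ we have $x_j^1=x_j^2=x_j$, so $(\textbf{x}^1\oplus\textbf{x}^2)_j=x_j$. For $j=i_1$ we have $x_{i_1}^1<x_{i_1}$ but $x_{i_1}^2=x_{i_1}$ (since $i_1\neq i_2$), hence $(\textbf{x}^1\oplus\textbf{x}^2)_{i_1}=\max(x_{i_1}^1,x_{i_1})=x_{i_1}$; symmetrically $(\textbf{x}^1\oplus\textbf{x}^2)_{i_2}=x_{i_2}$. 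Thus $\textbf{x}=\textbf{x}^1\oplus\textbf{x}^2$ with $\textbf{x}^1,\textbf{x}^2\in\mathscr{X}^{\leq\textbf{x}}$, and Lemma~\ref{Lem3} immediately gives that $\textbf{x}$ is not an extremal in $\mathscr{X}'$.

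There is essentially no obstacle here: the only thing to be slightly careful about is that $i_1\neq i_2$ is genuinely used, so that the decrease performed at $i_1$ does not interfere with the coordinate $i_2$ and vice versa, guaranteeing that the max of the two perturbed vectors recovers $\textbf{x}$ exactly. One should also note that both $\textbf{x}^1$ and $\textbf{x}^2$ are strictly below $\textbf{x}$ (in the coordinates $i_1$, resp.\ $i_2$) and hence genuinely lie in $\mathscr{X}^{\leq\textbf{x}}$, so Lemma~\ref{Lem3} applies verbatim. This completes the argument.
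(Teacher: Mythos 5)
Your argument is correct and is essentially identical to the paper's own proof: both apply Proposition~\ref{Pro1.1} to each of the two variable nodes to obtain $\textbf{x}^1,\textbf{x}^2\in\mathscr{X}^{\leq\textbf{x}}$ and then invoke Lemma~\ref{Lem3} after observing $\textbf{x}=\textbf{x}^1\oplus\textbf{x}^2$. Your only addition is the explicit componentwise check of this last equality, which the paper dismisses as obvious.
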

\begin{proof}
Without loss of generality assume that there are two variable nodes in $\mathcal{D}_{\textbf{x}}$, say $i, j$, then by Proposition \ref{Pro1.1} we can get two different solutions $\textbf{x}^{1}, \textbf{x}^{2}\in\mathscr{X}^{\leq\textbf{x}}$, where $\textbf{x}^{1}=(x^{1}_1, x^{1}_2, \cdots, x^{1}_n)^T$ with $x^{1}_i<x_{i}$ and $x^{1}_{s}=x_{s}$ for any $s\neq i, s\in N$, $\textbf{x}^{2}=(x^{2}_1, x^{2}_2, \cdots, x^{2}_n)^T$ with $x^{2}_{j}<x_{j}$ and $x^{2}_{t}=x_{t}$ for any $t\neq j, t\in N$. Obviously $\textbf{x}=\textbf{x}^{1}\oplus\textbf{x}^{2}$. Hence by Lemma \ref{Lem3} $\textbf{x}$ is not an extremal.
\end{proof}

In contrast to Proposition \ref{Pro1.1}, we have:

\begin{proposition}
\label{Pro1.2}
Let $\emph{\textbf{x}}=(x_1, x_2, \cdots, x_n)^T\in\mathscr{X}$ and $i\in \emph{\text{Supp}}(\emph{\textbf{x}})$ be an invariable node. Then $\emph{\textbf{x}}'\notin\mathscr{X}$ for any $\emph{\textbf{x}}'=(x_1', x_2', \cdots, x_n')$ satisfying $x_{i}'<x_{i}$ and $x_{j}'=x_{j}$ for all $j\neq i, j\in N$.
\end{proposition}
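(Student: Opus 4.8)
The plan is to prove the contrapositive-flavoured statement directly: assuming $i$ is an invariable node, we show that lowering the $i$-th coordinate below $x_i$ (while keeping all other coordinates fixed) necessarily violates some inequality of the system $A\otimes\textbf{x}\geq\textbf{x}$. First I would unpack the definition of an invariable node: there exists $j\neq i$ with $(i,j)\in E_{\textbf{x}}$ such that $j$ has no other incoming arc in $\mathcal{D}_{\textbf{x}}$. By the definition of the arc set $E_{\textbf{x}}$ and of $V_{\textbf{x}}(j)$, the condition $(i,j)\in E_{\textbf{x}}$ means $i\in V_{\textbf{x}}(j)$, i.e.\ $a_{ji}+x_i=\max_{k\in N}(a_{jk}+x_k)=x_j$. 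The condition that $j$ has no other incoming arc means that for every $k\in\text{Supp}(\textbf{x})$ with $k\neq i$ we have $a_{jk}+x_k\neq x_j$; combined with the $j$-th inequality $A_j\otimes\textbf{x}\geq x_j$ (which holds as equality here because $a_{ji}+x_i=x_j$ is already the maximum), this forces $a_{jk}+x_k<x_j$ for all $k\neq i$ (and also $a_{jk}+x_k=-\infty<x_j$ for $k\notin\text{Supp}(\textbf{x})$, since $x_j\in\mathbb{R}$).

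The key step is then the following computation for the $j$-th inequality applied to $\textbf{x}'$. Since $x_k'=x_k$ for all $k\neq i$ and $x_i'<x_i$, we have
\begin{equation*}
A_j\otimes\textbf{x}'=\max_{k\in N}(a_{jk}+x_k')=\max\Big(a_{ji}+x_i',\ \max_{k\neq i}(a_{jk}+x_k)\Big).
\end{equation*}
Now $a_{ji}+x_i'<a_{ji}+x_i=x_j$ (note $a_{ji}\in\mathbb{R}$ since $a_{ji}+x_i=x_j\in\mathbb{R}$), and $\max_{k\neq i}(a_{jk}+x_k)<x_j$ by the previous paragraph. Hence $A_j\otimes\textbf{x}'<x_j=x_j'$, so the $j$-th inequality fails for $\textbf{x}'$, and therefore $\textbf{x}'\notin\mathscr{X}$.

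The argument is essentially a short direct calculation, so I do not anticipate a serious obstacle; the only point requiring a little care is the bookkeeping that converts ``$j$ has no \emph{other} incoming arc'' into the strict inequalities $a_{jk}+x_k<x_j$ for all $k\neq i$. This uses that the maximum $\max_{k}(a_{jk}+x_k)$ equals $x_j$ and is attained at $k=i$, so any other index attaining the maximum would create an incoming arc at $j$, contradicting invariability — hence all other indices give values strictly below $x_j$. One should also note explicitly that $a_{ji}$ is finite (otherwise $a_{ji}+x_i$ could not equal the finite number $x_j$), which is what guarantees that decreasing $x_i$ strictly decreases the term $a_{ji}+x_i'$. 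With these points in place, the proof is complete.
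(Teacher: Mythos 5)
Your proof is correct and follows essentially the same route as the paper's: identify the node $j$ (the paper calls it $t$) reached by the arc $(i,j)$ whose uniqueness as an incoming arc forces $a_{jk}+x_k<x_j$ for all $k\neq i$, and then observe that lowering $x_i$ makes $A_j\otimes\textbf{x}'<x_j=x_j'$. Your additional remarks on the finiteness of $a_{ji}$ and on converting ``no other incoming arc'' into strict inequalities are just a more explicit spelling-out of steps the paper leaves implicit.
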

\begin{proof}
Let $i\in \text{Supp}(\textbf{x})$ be an invariable node. By the definition of invariable node, there exists node $t\in \text{Supp}(\textbf{x})$, $t\neq i$ such that $(i, t)\in E_{\textbf{x}}$ and $t$ has no other incoming arc, that is, $a_{ti}+x_{i}=x_{t}$ and $a_{tj}+x_{j}<x_{t}$ for all $j\in \text{Supp}(\textbf{x})$, $j\neq i$. Then for any $\textbf{x}'=(x_1', x_2', \cdots, x_n')$ with $x_{i}'<x_{i}$ and $x_{j}'=x_{j}$ for all $j\neq i, j\in N$ we obtain
$$A_{t}\otimes\textbf{x}'=\sum\limits_{k\in N}^{\oplus}(a_{tk}+x_{k}')=(a_{ti}+x_{i}')\oplus\sum\limits_{j\neq i}^{\oplus}(a_{tj}+x_{j})<x_{t}.$$
Thus, $\textbf{x}'\notin\mathscr{X}$.
\end{proof}

\subsection{Extremality criteria}

The goal of this subsection are the extremality criteria stated in Theorem~\ref{Th:main}. Note that in Corollary~\ref{Cor:cond3} we already established that the third of these
criteria is necessary, and Propositions~\ref{Pro:Cond1} and~\ref{Pro:Cond2} will serve to prove that the first two criteria are also necessary. Corollary~\ref{c:important} will be another important result treating the case of no variable nodes. After this we will focus on the proof of sufficiency of the criteria stated in Theorem~\ref{Th:main}.

The next definition will be required to formulate one of the criteria of Theorem~\ref{Th:main}.

\begin{definition}
\label{De1.4}
Let $\emph{\textbf{x}}=(x_1, x_2, \cdots, x_n)^T\in\mathscr{X}$ and $W\subseteq \emph{\text{Supp}}(\emph{\textbf{x}})$.
If, for any node of $W$, neither ingoing arcs from nodes of $\emph{\text{Supp}}(\emph{\textbf{x}})\backslash W$ to that node of $W$ nor outgoing arcs from that node of $W$ to nodes of $\emph{\text{Supp}}(\emph{\textbf{x}})\backslash W$ exist in $\mathcal{D}_{\emph{\textbf{x}}}$, then $W$ is called an {\rm isolated node set} in $\mathcal{D}_{\emph{\textbf{x}}}$.
\end{definition}


Note that if $W\subsetneqq \text{Supp}(\textbf{x})$ is an isolated node set, then there are at least two isolated node sets in $\text{Supp}(\textbf{x})$: they are $W$ and $\text{Supp}(\textbf{x})\backslash W$.

\begin{proposition}
\label{Pro:Cond1}
Let $\emph{\textbf{x}}=(x_1, x_2, \cdots, x_n)^T\in\mathscr{X}$. If there exists a proper subset of $\emph{\text{Supp}}(\emph{\textbf{x}})$ that is isolated in $\mathcal{D}_{\emph{\textbf{x}}}$, then $\emph{\textbf{x}}$ is not an extremal.
\end{proposition}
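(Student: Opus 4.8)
The plan is to show that if $W \subsetneqq \text{Supp}(\textbf{x})$ is isolated in $\mathcal{D}_{\textbf{x}}$, then $\textbf{x}$ decomposes as $\textbf{x} = \textbf{x}^1 \oplus \textbf{x}^2$ with $\textbf{x}^1, \textbf{x}^2 \in \mathscr{X}^{\leq \textbf{x}}$, so that Lemma~\ref{Lem3} applies. The natural candidates are the two vectors obtained by "pushing down" all the coordinates in $W$ simultaneously and, separately, all the coordinates in $\text{Supp}(\textbf{x}) \setminus W$ simultaneously. Concretely, let $\textbf{x}^1$ agree with $\textbf{x}$ outside $W$ and satisfy $x^1_i = x_i + c$ for $i \in W$, where $c < 0$ is a sufficiently small (negative) constant; symmetrically let $\textbf{x}^2$ agree with $\textbf{x}$ on $W$ and be lowered by the same (or another small negative) constant on $\text{Supp}(\textbf{x}) \setminus W$. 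Then clearly $\textbf{x} = \textbf{x}^1 \oplus \textbf{x}^2$, $\textbf{x}^1 \leq \textbf{x}$, $\textbf{x}^2 \leq \textbf{x}$, and both are strictly below $\textbf{x}$ since $W$ and its complement are both nonempty. The only thing left is to check that $\textbf{x}^1, \textbf{x}^2 \in \mathscr{X}$, i.e.\ that lowering one block of coordinates by a constant preserves all the inequalities $A_j \otimes \textbf{x} \geq x_j$.

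For the verification that $\textbf{x}^1 \in \mathscr{X}$ I would split on the location of row index $j$. If $j \notin \text{Supp}(\textbf{x})$ the inequality is trivial. If $j \in \text{Supp}(\textbf{x}) \setminus W$: then $x^1_j = x_j$, and by the isolation hypothesis there is no arc into $j$ from $W$, which means every term $a_{jk} + x_k$ with $k \in W$ is strictly less than $x_j = A_j \otimes \textbf{x}$; hence the maximum $A_j \otimes \textbf{x}$ is attained on an index $k \notin W$ (or on an index outside $\text{Supp}(\textbf{x})$, contributing $-\infty$, which cannot be the maximum since $x_j$ is finite), where $x^1_k = x_k$, so $A_j \otimes \textbf{x}^1 \geq x_j = x^1_j$. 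If $j \in W$: then $x^1_j = x_j + c$, and by isolation there is no arc from $j$ to any node outside $W$, but more to the point there is at least one arc realizing $V_{\textbf{x}}(j)$, i.e.\ some $k$ with $a_{jk} + x_k = x_j$; by isolation (no incoming arc to $j$ from outside $W$) such $k$ lies in $W$, so $a_{jk} + x^1_k = a_{jk} + x_k + c = x_j + c = x^1_j$, giving $A_j \otimes \textbf{x}^1 \geq x^1_j$. The argument for $\textbf{x}^2$ is identical with the roles of $W$ and $\text{Supp}(\textbf{x}) \setminus W$ exchanged. Having established $\textbf{x}^1, \textbf{x}^2 \in \mathscr{X}^{\leq \textbf{x}}$ and $\textbf{x} = \textbf{x}^1 \oplus \textbf{x}^2$, Lemma~\ref{Lem3} immediately yields that $\textbf{x}$ is not an extremal.

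One subtlety deserving care is the case $j \in W$ when the maximizing index $k$ for row $j$ happens to satisfy $k \notin \text{Supp}(\textbf{x})$ — but this is impossible, because $x_j$ is finite and a term $a_{jk} + x_k$ with $x_k = -\infty$ equals $-\infty$, so it cannot equal $x_j$; thus the set $V_{\textbf{x}}(j)$ is nonempty and contained in $\text{Supp}(\textbf{x})$, and by isolation it is in fact contained in $W$. I expect this bookkeeping about supports and the placement of maximizing indices to be the only real obstacle; the construction itself is transparent. An alternative, perhaps cleaner, route would be to invoke Proposition~\ref{Pro1.1} repeatedly — noting that every node of an isolated proper block can be lowered — but the direct block-shift argument above avoids any iteration and keeps the two decomposition vectors manifestly in $\mathscr{X}^{\leq\textbf{x}}$ in one stroke.
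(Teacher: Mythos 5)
Your overall strategy --- decompose $\textbf{x}=\textbf{x}^1\oplus\textbf{x}^2$ by lowering the block $W$ and its complement separately by a constant and invoke Lemma~\ref{Lem3} --- is the same as the paper's, but your verification for rows $j\in\text{Supp}(\textbf{x})\setminus W$ contains a genuine gap. You argue that since $W$ is isolated there is no arc into $j$ from $W$, ``which means every term $a_{jk}+x_k$ with $k\in W$ is strictly less than $x_j=A_j\otimes\textbf{x}$'', and hence that the maximum is attained outside $W$. This inference is false: by the definition of $V_{\textbf{x}}(j)$, an arc $(k,j)$ exists only when $a_{jk}+x_k=\max_l(a_{jl}+x_l)=x_j$, i.e.\ when the term attains the maximum \emph{and} that maximum equals $x_j$ exactly. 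A node $j$ with $A_j\otimes\textbf{x}>x_j$ has no incoming arcs in $\mathcal{D}_{\textbf{x}}$ at all, yet its inequality may be supported solely by a slack term $a_{jk}+x_k>x_j$ with $k\in W$ (take, for instance, a row $j$ outside $W$ whose only finite term over $\text{Supp}(\textbf{x})$ is $a_{jk}+x_k=x_j+5$ with $k\in W$; then $W$ can still be isolated). For such $j$ the maximum of $A_j\otimes\textbf{x}^1$ sits inside $W$, and if the decrement $|c|$ exceeds the slack, the inequality $A_j\otimes\textbf{x}^1\geq x_j^1$ breaks. Your phrase ``sufficiently small (negative) constant'' gestures at the remedy, but the verification you actually write never uses it --- it asserts a claim under which any $c<0$ would work, and that claim is wrong. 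The paper closes exactly this gap by isolating the problematic set $K$ of complement nodes whose inequality is supported only by slack terms from $W_1$, and by bounding the decrement by $a=\min_{i\in K}\max_{j\in I_{i}}(a_{ij}+x_{j}-x_{i})$; your proof needs the analogous quantitative bound and the corresponding extra case.

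A smaller instance of the same confusion appears in your case $j\in W$: you assert that $V_{\textbf{x}}(j)\neq\emptyset$, which again fails whenever $A_j\otimes\textbf{x}>x_j$. That case is harmless and easily repaired (any index $k$ attaining the maximum gives $a_{jk}+x_k\geq x_j$, which after the uniform shift on $W$ still dominates $x_j+c$ whether or not $k\in W$), but the complement case above is the one that genuinely requires the extra bookkeeping, and as written the proof does not go through without it.
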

\begin{proof}
Let $W_1\subsetneqq \text{Supp}(\textbf{x})$ is an isolated node set in $\mathcal{D}_{\textbf{x}}$. Then the complement
$W_2=\text{Supp}(\textbf{x})\backslash W_1$ is also isolated in $\mathcal{D}_{\textbf{x}}$.
Let $W_1=\{i_{1}, i_{2},\cdots, i_{s}\}$ and $W_2=\{j_{1}, j_{2},\cdots, j_{t}\}$.
We claim that there exist two different solutions $\textbf{x}^{1}, \textbf{x}^{2}\in\mathscr{X}^{\leq\textbf{x}}$ such that $\textbf{x}=\textbf{x}^{1}\oplus\textbf{x}^{2}$, where $\textbf{x}^{1}=(x^{1}_1, x^{1}_2, \cdots, x^{1}_n)^T$ with $x^{1}_i<x_{i}$ for all $i\in W_{1}$ and $x^{1}_j=x_{j}$ for all $j\notin W_1$ and $\textbf{x}^{2}=(x^{2}_1, x^{2}_2, \cdots, x^{2}_n)^T$ with $x^{2}_j<x_{j}$ for all $j\in W_2$ and $x^{2}_i=x_{i}$ for all $i\notin W_2$.

By symmetry, it suffices to prove the existence of $\textbf{x}^{1}$ with the above properties.  Denote
$$K=\{i\in W_{2}: a_{ij}+x_{j}>x_{i} \text{ for some } j\in W_{1} \text{ and } a_{ii'}+x_{i'}<x_{i} \text{ for all } i'\in W_{2} \}$$
If $K\neq \emptyset$, then for any $i\in K$ denote $I_{i}=\{j\in W_{1}: a_{ij}+x_{j}>x_{i}\}$ and denote
$$a=\min\limits_{i\in K}\max\limits_{j\in I_{i}}(a_{ij}+x_{j}-x_{i}).$$

Note also that, since $A_i\otimes\textbf{x}\geq x_i$ and since $W_1$ and $W_2$ are isolated in  $\mathcal{D}_{\textbf{x}}$, we can simplify $K$ as follows:
$$K=\{i\in W_{2}\colon a_{ij}+x_{j}<x_{i} \text{ for all } j\in W_{2} \}$$

We will set $\textbf{x}^{1}=(x^{1}_1, x^{1}_2, \cdots, x^{1}_n)^T$ with $x^{1}_i=x_{i}-b$ for any $i\in W_{1}$ and $x^{1}_j=x_{j}$ for any $j\notin W_{1}$. Here $b$ is a real number that satisfies $0<b\leq a$ when $K\neq\emptyset$, and arbitrary positive real number when $K=\emptyset$.  Obviously such $\textbf{x}^{1}$ satisfies $\textbf{x}^{1}\leq\textbf{x}$ and $\textbf{x}^{1}\neq\textbf{x}$, so we only need to prove $A\otimes\textbf{x}^{1}\geq\textbf{x}^{1}$. We need to consider the following special cases:
\begin{itemize}
\item[(1)] If $i\notin\text{Supp}(\textbf{x})$, then the inequality $A_i\otimes \textbf{x}^{1}\geq x^1_i=-\infty$ is obvious.
\item[(2)] If $i\in W_{1}$ and $(j, i)\in E_{\textbf{x}}$ for some $j$, then $j\in W_1$ since $W_1$ is isolated, and
$A_{i}\otimes\textbf{x}^{1}\geq a_{ij}+x_{j}^{1}=a_{ij}+x_{j}-b=x_{i}-b=x_{i}^{1}$, thus $A_{i}\otimes\textbf{x}^{1}\geq x_i^1$.
\item[(3)] If $i\in W_{1}$ and there exists $j\in W_{1}$ such that $a_{ij}+x_{j}>x_{i}$, then $A_{i}\otimes\textbf{x}^{1}\geq a_{ij}+x_{j}^{1}=a_{ij}+x_{j}-b>x_{i}-b=x_{i}^{1}$, thus $A_{i}\otimes\textbf{x}^{1}\geq x_i^1$.
\item[(4)] If $i\in W_{1}$ and there exists $j\notin W_{1}$ such that $a_{ij}+x_{j}>x_{i}$, then  $A_{i}\otimes\textbf{x}^{1}\geq a_{ij}+x_{j}^{1}=a_{ij}+x_{j}>x_{i}>x_{i}-b=x_{i}^{1}$, thus  $A_{i}\otimes\textbf{x}^{1}\geq x_i^1$.%
\item[(5)] If $i\in W_{2}$ and $i\notin K$, then there exists $j\in W_{2}$ such that $a_{ij}+x_{j}\geq x_{i}$.
Then $A_{i}\otimes\textbf{x}^{1}\geq a_{ij}+x_{j}^{1}=a_{ij}+x_{j}\geq x_{i}=x_{i}^{1}$, thus  $A_{i}\otimes\textbf{x}^{1}\geq x_i^1$.
\item[(6)] If $i\in W_{2}$ and $i\in K$, then taking into account that $K\neq\emptyset$ and using that $0<b\leq a$ and
the definition of $a$ given earlier, we obtain:
\begin{align*}
&A_{i}\otimes\textbf{x}^{1}\geq\sum\limits_{j\in I_{i}}^{\oplus}(a_{ij}+x^{1}_j)\geq\sum\limits_{j\in I_{i}}^{\oplus}(a_{ij}+x_j-a)\\
&=\sum\limits_{j\in I_{i}}^{\oplus}(a_{ij}+x_j)-a\geq\sum\limits_{j\in I_{i}}^{\oplus}(a_{ij}+x_j)-\max\limits_{j\in I_{i}}(a_{ij}+x_{j}-x_{i})\\
&=\sum\limits_{j\in I_{i}}^{\oplus}(a_{ij}+x_j)-\sum\limits_{j\in I_{i}}^{\oplus}(a_{ij}+x_{j})+x_{i}\\
&=x_{i}=x_{i}^{1},
\end{align*}
thus  $A_{i}\otimes\textbf{x}^{1}\geq x_i^1$.
\end{itemize}
Solution $\textbf{x}^2$ can be constructed similarly, and solutions $\textbf{x}^{1}$ and $\textbf{x}^{2}$ constructed by the above method obviously satisfy $\textbf{x}=\textbf{x}^{1}\oplus\textbf{x}^{2}$, where $\textbf{x}^{1}, \textbf{x}^{2}\in\mathscr{X}^{\leq\textbf{x}}$, so $\textbf{x}$ is not an extremal.
\end{proof}

Let $\textbf{x}=(x_1, x_2, \cdots, x_n)^T\in\mathscr{X}$. Denote $L_{\sigma}=\{i\in \text{Supp}(\textbf{x}): i\in\sigma\}$ for some cycle $\sigma=(i_{1}, i_{2}, \cdots, i_{s}, i_{s+1}=i_{1})$ in $\mathcal{D}_{\textbf{x}}$ and denote the set of all nodes in an arbitrarily taken path $i-j$ in $\mathcal{D}_{\textbf{x}}$ by $L_{i-j}$. Specifically, the set of all nodes in path $(j_{1}, j_{2}, \cdots, j_{t})$ in $\mathcal{D}_{\textbf{x}}$ can be denoted by $L_{(j_{1}, j_{2}, \cdots, j_{t})}$.

\begin{proposition}
\label{Pro:Cond2}
Let $\emph{\textbf{x}}=(x_1, x_2, \cdots, x_n)^T\in\mathscr{X}$. 
If there are at least two cycles in $\mathcal{D}_{\emph{\textbf{x}}}$ whose node sets are disjoint, then $\emph{\textbf{x}}$ is not an extremal.
\end{proposition}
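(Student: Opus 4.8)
The plan is to exhibit two solutions $\textbf{x}^1,\textbf{x}^2\in\mathscr{X}^{\leq\textbf{x}}$ with $\textbf{x}=\textbf{x}^1\oplus\textbf{x}^2$, so that Lemma~\ref{Lem3} finishes the argument. Let $\sigma_1$ and $\sigma_2$ be two cycles in $\mathcal{D}_{\textbf{x}}$ with $L_{\sigma_1}\cap L_{\sigma_2}=\emptyset$. The rough idea is that every node lying on a cycle is ``pinned'' by that cycle — following the cyclic arcs around $\sigma_1$ gives a sequence of equalities $a_{i_{k+1}i_k}+x_{i_k}=x_{i_{k+1}}$ that force any attempt to lower all of $L_{\sigma_1}$ simultaneously by the same amount $b$ to stay in $\mathscr{X}$, at least as far as the inequalities indexed by nodes of $L_{\sigma_1}$ are concerned. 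So I would aim to lower all coordinates indexed by $L_{\sigma_1}$ by a common small amount $b_1>0$ to get $\textbf{x}^1$, and symmetrically lower all coordinates indexed by $L_{\sigma_2}$ by $b_2>0$ to get $\textbf{x}^2$; since $L_{\sigma_1}$ and $L_{\sigma_2}$ are disjoint, $\textbf{x}=\textbf{x}^1\oplus\textbf{x}^2$ is automatic, and both are strictly below $\textbf{x}$ and distinct.

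First I would set up $\textbf{x}^1$ by $x^1_i=x_i-b_1$ for $i\in L_{\sigma_1}$ and $x^1_j=x_j$ otherwise, and verify $A\otimes\textbf{x}^1\ge\textbf{x}^1$ inequality by inequality, splitting on the index $i$ exactly as in the proof of Proposition~\ref{Pro:Cond1}. The cases $i\notin\mathrm{Supp}(\textbf{x})$ and $i\notin L_{\sigma_1}$ are immediate (for the latter, $x^1_i=x_i$ and $A_i\otimes\textbf{x}^1\ge A_i\otimes\textbf{x}'$ for a suitable comparison, since lowering other coordinates can only help once we check that some tight or slack term survives). For $i\in L_{\sigma_1}$, the key point is that $i$ has an outgoing arc in $\sigma_1$, i.e.\ there is $j\in L_{\sigma_1}$ with $(i,j)\in E_{\textbf{x}}$, hence $a_{ji}+x_i=x_j$; but I actually need an \emph{incoming} tight arc at $i$ from inside $L_{\sigma_1}$, which is exactly what the cycle provides: the predecessor of $i$ on $\sigma_1$, call it $i^-$, satisfies $i^-\in L_{\sigma_1}$ and $a_{ii^-}+x_{i^-}=x_i$, so $A_i\otimes\textbf{x}^1\ge a_{ii^-}+x^1_{i^-}=a_{ii^-}+x_{i^-}-b_1=x_i-b_1=x^1_i$. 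That settles the $i\in L_{\sigma_1}$ inequalities for \emph{any} $b_1>0$, with no upper bound needed — a pleasant simplification compared to Proposition~\ref{Pro:Cond1}, where the analogue of the set $K$ had to be controlled.

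The remaining obstacle, and the step I expect to require the most care, is the bookkeeping for indices $i\notin L_{\sigma_1}$: here $x^1_i=x_i$, and I must check $A_i\otimes\textbf{x}^1\ge x_i$ even though some terms $a_{ik}+x^1_k$ with $k\in L_{\sigma_1}$ may have dropped by $b_1$. As in Proposition~\ref{Pro:Cond1}, the fix is to choose $b_1$ small enough: if every tight term $a_{ik}+x_k=x_i$ for such $i$ uses some $k\notin L_{\sigma_1}$ then any $b_1>0$ works, and otherwise one takes $b_1$ bounded above by the minimum, over the finitely many problematic $i$, of the slack $\max_{k:\,a_{ik}+x_k>x_i}(a_{ik}+x_k-x_i)$ relative to the strictly-positive terms — arguing that such a strictly-larger term must exist and lie outside $L_{\sigma_1}$, or else that lowering by that much keeps the term at least $x_i$. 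I would state this bound once, note it is positive, and then the six-way case split goes through verbatim as in the previous proof. Constructing $\textbf{x}^2$ from $\sigma_2$ is identical by symmetry, $\textbf{x}=\textbf{x}^1\oplus\textbf{x}^2$ holds because the two modified coordinate sets are disjoint, and Lemma~\ref{Lem3} gives that $\textbf{x}$ is not an extremal. $\qed$
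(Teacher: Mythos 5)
There is a genuine gap in your treatment of the indices $i\notin L_{\sigma_1}$, and it is not repairable by shrinking $b_1$. Consider a node $i\in\text{Supp}(\textbf{x})\setminus L_{\sigma_1}$ for which the $i$-th inequality is tight, $A_i\otimes\textbf{x}=x_i$, and for which \emph{every} maximizing index lies in $L_{\sigma_1}$: say $V_{\textbf{x}}(i)\subseteq L_{\sigma_1}$ and $a_{ik}+x_k<x_i$ for all $k\notin L_{\sigma_1}$. (This happens, e.g., whenever a node hangs off the cycle $\sigma_1$ by a single tight arc in $\mathcal{D}_{\textbf{x}}$ and has no other incoming arc with nonnegative slack; Figure~\ref{f1} of the paper already contains such configurations.) Then for \emph{any} $b_1>0$ your vector $\textbf{x}^1$ gives $A_i\otimes\textbf{x}^1=x_i-b_1<x_i=x^1_i$, so $\textbf{x}^1\notin\mathscr{X}$. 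Your proposed fix assumes that in the problematic case ``a strictly-larger term must exist'': that is false, since $\textbf{x}\in\mathscr{X}$ only guarantees $A_i\otimes\textbf{x}\geq x_i$, and equality achieved solely on $L_{\sigma_1}$ is perfectly possible. The fallback clause ``lowering by that much keeps the term at least $x_i$'' also fails, because a term equal to $x_i$ lowered by $b_1>0$ drops strictly below $x_i$.

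The missing idea is that you must lower not only the coordinates on the cycle but also those of every node reachable from the cycle by tight arcs (avoiding the other cycle). This is exactly what the paper's proof does: it replaces $L_{\sigma_1}$ by the closure
$\Sigma_1=L_{\sigma_1}\cup\{s: i\rightarrow s \text{ in }\mathcal{D}_{\textbf{x}} \text{ for some } i\in L_{\sigma_1}\text{, via a path missing }L_{\sigma_2}\}$,
defines $\Sigma_2$ symmetrically while excluding $\Sigma_1$ to keep the two sets disjoint (so that $\textbf{x}=\textbf{x}^1\oplus\textbf{x}^2$ survives), and only then lowers all of $\Sigma_1$ by a common $c>0$. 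With this enlargement, every node of $\Sigma_1$ has a tight incoming arc from within $\Sigma_1$ (your cycle argument, extended along the tight paths), and --- crucially --- any node \emph{outside} $\Sigma_1\cup\Sigma_2$ with a tight incoming arc from $\Sigma_1\cup\Sigma_2$ would already have been absorbed into $\Sigma_1$ or $\Sigma_2$, so for the remaining nodes one either finds a surviving term $a_{ij}+x_j\geq x_i$ with $j\notin\Sigma_1$, or a strictly positive slack that bounds $c$ (the paper's sets $L$ and $M$, the analogue of $K$ in Proposition~\ref{Pro:Cond1}). Your instinct that the cycle itself needs no upper bound on $b_1$ is correct, but the case analysis for the off-cycle nodes only closes after passing to $\Sigma_1,\Sigma_2$; before that, Lemma~\ref{Lem3} cannot be invoked because $\textbf{x}^1,\textbf{x}^2$ need not lie in $\mathscr{X}^{\leq\textbf{x}}$.
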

\begin{proof}
Let $\sigma_1$ and $\sigma_2$ be two node disjoint cycles in $\mathcal{D}_{\textbf{x}}$. Denote
\begin{align*}
\Sigma_{1}=&\; L_{\sigma_{1}}\cup\{s\in \text{Supp}(\textbf{x})\backslash (L_{\sigma_{1}}\cup L_{\sigma_{2}})\colon \\
& (\exists i\in L_{\sigma_{1}})(i\rightarrow s \mbox{ }\text{and} \mbox{ } L_{i-s}\cap L_{\sigma_{2}}=\emptyset
\mbox{ }\text{for some}\mbox{ } i-s\mbox{ }\text{path})\},\\
\Sigma_{2}=&\; L_{\sigma_{2}}\cup\{t\in \text{Supp}(\textbf{x})\backslash (L_{\sigma_{1}}\cup L_{\sigma_{2}})\mbox{ } \text{and}\mbox{ }t\notin\Sigma_{1}\colon\\
& (\exists j\in L_{\sigma_{2}})(j\rightarrow t \mbox{ }\text{and} \mbox{ } L_{j-t}\cap L_{\sigma_{1}}=\emptyset\mbox{ }\text{for some} \mbox{ }j-t\mbox{ }\text{path})\}.
\end{align*}
Obviously $\Sigma_{1}\cap\Sigma_{2}=\emptyset$. Then we can construct two different solutions
$\textbf{x}^{1}, \textbf{x}^{2}\in\mathscr{X}^{\leq\textbf{x}}$ such that
$\textbf{x}=\textbf{x}^{1}\oplus\textbf{x}^{2}$.
We construct these solutions as follows. First we define the two possibly empty index sets
\begin{equation*}
\begin{split}
L= &\;\{l\in \text{Supp}(\textbf{x})\backslash(\Sigma_{1}\cup\Sigma_{2})\colon a_{li}+x_{i}>x_{l} \text{ for some } i\in \Sigma_{1}\\
 & \text{ and } a_{lj}+x_{j}<x_{l} \text{ for all } j\notin \Sigma_{1}\},\\
M= &\;\{m\in \text{Supp}(\textbf{x})\backslash(\Sigma_{1}\cup\Sigma_{2})\colon a_{mj}+x_{j}>x_{m} \text{ for some } j\in \Sigma_{2}\\ 
&\text{ and } a_{mi}+x_{i}<x_{m} \text{ for all } i\notin \Sigma_{2}\}.
\end{split}
\end{equation*}
If $L\neq \emptyset$, then for any $i\in L$ denote $L_{i}=\{j\in\Sigma_{1}: a_{ij}+x_{j}>x_{i}\}$ and let
\begin{equation*}
a=\min\limits_{i\in L}\max\limits_{j\in L_{i}}(a_{ij}+x_{j}-x_{i}).
\end{equation*}
If $M\neq \emptyset$, then for any $i\in M$ denote $M_{i}=\{j\in\Sigma_{2}: a_{ij}+x_{j}>x_{i}\}$ and let
\begin{equation*}
b=\min\limits_{i\in M}\max\limits_{j\in M_{i}}(a_{ij}+x_{j}-x_{i}).
\end{equation*}
With these notations, we define
$\textbf{x}^{1}=(x^{1}_1, x^{1}_2, \cdots, x^{1}_n)^T$ where $x^{1}_i=x_{i}-c$ for any $i\in \Sigma_{1}$ and $0<c\leq a$ if $L\neq\emptyset$ and $c>0$ if $L=\emptyset$, and  $x^{1}_j=x_{j}$ for any $j\notin \Sigma_{1}$.  We also define $\textbf{x}^{2}=(x^{2}_1, x^{2}_2, \cdots, x^{2}_n)^T$ where $x^{2}_j=x_{j}-d$ for any $j\in \Sigma_{2}$ and $0<d\leq b$ if $M\neq\emptyset$ and $d>0$ if $M=\emptyset$, and $x^{2}_i=x_{i}$ for any $i\notin \Sigma_{2}$.

Let us also make the following observation, to simplify our expressions for $L$ and $M$. Namely, observe that we cannot have
$a_{sj}+x_j=\max_{i\in N} a_{si}+x_i=x_s$ with $s\in  \text{Supp}(\textbf{x})\backslash(\Sigma_{1}\cup\Sigma_{2})$ and $j\in \Sigma_1\cup\Sigma_2$,
because then $s$ can be accessible from $\Sigma_1$ or $\Sigma_2$ (implying $s\in\Sigma_1$ or $s\in\Sigma_2$),
in contradiction with $s\in \text{Supp}(\textbf{x})\backslash(\Sigma_{1}\cup\Sigma_{2})$.
This observation allows us to write the following:
\begin{equation*}
\begin{split}
L&=\{i\in\text{Supp}(\textbf{x})\backslash(\Sigma_1\cup\Sigma_2)\colon a_{ij}+x_j<x_i\quad \forall j\notin\Sigma_1\}\\
M&=\{i\in\text{Supp}(\textbf{x})\backslash(\Sigma_1\cup\Sigma_2)\colon a_{ij}+x_j<x_i\quad \forall j\notin\Sigma_2\}
\end{split}
\end{equation*}

Obviously $\textbf{x}=\textbf{x}^{1}\oplus\textbf{x}^{2}$ and $\textbf{x}\neq\textbf{x}^{1}$, $\textbf{x}\neq\textbf{x}^{2}$. Hence it is sufficient to prove that $\textbf{x}^{1}\in \mathscr{X}$ and $\textbf{x}^{2}\in \mathscr{X}$.
To prove $\textbf{x}^{1}\in \mathscr{X}$ we will consider the following special cases:
\begin{itemize}
\item[(0)] If $i\notin \text{Supp}(\textbf{x})$, then  $A_{i}\otimes\textbf{x}^{1}\geq x_i^1=-\infty$ is obvious.
\item[(1)] If $i\in\Sigma_{1}$ then the construction of $\Sigma_{1}$ yields $(j,i)\in E_{\textbf{x}}$ for some $j\in\Sigma_{1}$. Then
$A_{i}\otimes\textbf{x}^{1}\geq a_{ij}+x_{j}^{1}=a_{ij}+x_{j}-c=x_{i}-c=x_{i}^{1}$, thus $A_{i}\otimes\textbf{x}^{1}\geq x_i^1$.
\item[(2)] If $i\in\Sigma_{2}$ then the construction of $\Sigma_{2}$ yields $(j,i)\in E_{\textbf{x}}$ for some $j\in\Sigma_{2}$. Then
$A_{i}\otimes\textbf{x}^{1}\geq a_{ij}+x_{j}^{1}=a_{ij}+x_{j}=x_{i}=x_{i}^{1}$, thus again $A_{i}\otimes\textbf{x}^{1}\geq x_i^1$.
\item[(3)] If $i\in\text{Supp}(\textbf{x})\backslash(\Sigma_{1}\cup\Sigma_{2})$ and $i\notin L$ then there exists $j\notin\Sigma_{1}$ such that $a_{ij}+x_{j}\geq x_{i}$. Then $A_{i}\otimes\textbf{x}^{1}\geq a_{ij}+x_{j}^{1}=a_{ij}+x_{j}\geq x_{i}=x_{i}^{1}$, thus $A_{i}\otimes\textbf{x}^{1}\geq x_i^1$.
\item[(4)] If $i\in\text{Supp}(\textbf{x})\backslash(\Sigma_{1}\cup\Sigma_{2})$ and $i\in L$ then, as $L\neq\emptyset$, we obtain:
\begin{align*}
&A_{i}\otimes\textbf{x}^{1}\geq\sum\limits_{j\in L_{i}}^{\oplus}(a_{ij}+x^{1}_j)\geq\sum\limits_{j\in L_{i}}^{\oplus}(a_{ij}+x_j-a)\\
&=\sum\limits_{j\in L_{i}}^{\oplus}(a_{ij}+x_j)-a\geq\sum\limits_{j\in L_{i}}^{\oplus}(a_{ij}+x_j)-\max\limits_{j\in L_{i}}(a_{ij}+x_{j}-x_{i})\\
&=\sum\limits_{j\in L_{i}}^{\oplus}(a_{ij}+x_j)-\sum\limits_{j\in L_{i}}^{\oplus}(a_{ij}+x_{j})+x_{i}\\
&=x_{i}=x_{i}^{1}
\end{align*}
thus $A_{i}\otimes\textbf{x}^{1}\geq x_i^1$.
\end{itemize}
Hence $\textbf{x}^{1}\in \mathscr{X}$, and the claim that $\textbf{x}^{2}\in \mathscr{X}$
can be proved similarly. 
\end{proof}

\begin{proposition}
\label{Pro1.5}
Let $\emph{\textbf{x}}=(x_1, x_2, \cdots, x_n)^T\in\mathscr{X}$. If $\emph{\text{Supp}}(\emph{\textbf{x}})$ does not have any proper subset that is isolated in $\mathcal{D}_{\emph{\textbf{x}}}$ and if all nodes in $\mathcal{D}_{\emph{\textbf{x}}}$ are invariable, then $\emph{\textbf{x}}'<\emph{\textbf{x}}$
for any $\emph{\textbf{x}}'\in\mathscr{X}^{\leq\emph{\textbf{x}}}$.
\end{proposition}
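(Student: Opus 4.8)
The plan is to show that under these hypotheses, any $\textbf{x}'\in\mathscr{X}^{\leq\textbf{x}}$ must satisfy $x'_i<x_i$ for \emph{every} $i\in\text{Supp}(\textbf{x})$, not merely $x'_i\le x_i$ with strict inequality somewhere. First I would set $U=\{i\in\text{Supp}(\textbf{x})\colon x'_i=x_i\}$ and $D=\{i\in\text{Supp}(\textbf{x})\colon x'_i<x_i\}$; since $\textbf{x}'\le\textbf{x}$ and $\textbf{x}'\ne\textbf{x}$, we have $D\ne\emptyset$, and I must prove $U=\emptyset$. Arguing by contradiction, suppose $U\ne\emptyset$. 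The goal is to derive that $U$ is a proper isolated node set of $\mathcal{D}_{\textbf{x}}$ (it is proper because $D\ne\emptyset$), contradicting the first hypothesis; the invariability hypothesis will be used to rule out arcs leaving $U$.

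The key step is the following dichotomy for each $i\in U$. Since $i$ is invariable, by Definition~\ref{De1.3} there is a node $t=t(i)\ne i$ with $(i,t)\in E_{\textbf{x}}$ and $t$ having no other incoming arc; that is, $a_{ti}+x_i=x_t$ and $a_{tj}+x_j<x_t$ for all $j\in\text{Supp}(\textbf{x})$, $j\ne i$. I claim $t\in U$. Indeed, evaluating the $t$-th inequality at $\textbf{x}'$:
$$A_t\otimes\textbf{x}'=\bigoplus_{k\in N}(a_{tk}+x'_k)=(a_{ti}+x'_i)\oplus\bigoplus_{j\ne i}(a_{tj}+x'_j)\le (a_{ti}+x_i)\oplus\bigoplus_{j\ne i}(a_{tj}+x_j).$$
If $i\in D$ were the case the first term would drop strictly below $x_t$ and the rest are already $<x_t$, giving $A_t\otimes\textbf{x}'<x_t$; but here $i\in U$ so $a_{ti}+x'_i=x_t$. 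Now if $t\in D$, i.e.\ $x'_t<x_t$, then the inequality $A_t\otimes\textbf{x}'\ge x'_t$ holds. We want more: I would instead track which coordinates are \emph{forced} to stay at their $\textbf{x}$-value. The cleaner route is to prove directly that for $i\in U$ and its witness $t$, we must have $t\in U$: the $t$-th inequality at $\textbf{x}'$ forces $A_t\otimes\textbf{x}'\ge x'_t$, and since all other terms $a_{tj}+x'_j\le a_{tj}+x_j<x_t$ for $j\ne i$, and $a_{ti}+x'_i=a_{ti}+x_i=x_t$ (using $i\in U$), we get $A_t\otimes\textbf{x}'=x_t$, hence $x'_t\le x_t$ gives nothing by itself --- so instead I invoke Proposition~\ref{Pro1.2}: since $t$ is the endnode making $i$ invariable and $t$ has no other incoming arc, lowering $x_i$ alone is forbidden; more to the point, consider the restriction argument below.

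Let me restate the core argument more carefully. Define $U$ as above and suppose $U\ne\emptyset$. I will show: (a) no arc of $\mathcal{D}_{\textbf{x}}$ goes from $U$ to $D$, and (b) no arc goes from $D$ to $U$; together with symmetry this makes $U$ isolated and proper, contradicting the hypothesis. For (a): if $i\in U$ has an outgoing arc $(i,t)\in E_{\textbf{x}}$ with $t\in D$, then since $i$ is invariable we may in fact choose (by Definition~\ref{De1.3}) a witness arc $(i,t_0)$ with $t_0$ having no other incoming arc; I claim $t_0\in U$ is impossible to avoid --- here one uses that at $\textbf{x}'$ the only surviving term in $A_{t_0}\otimes\textbf{x}'$ reaching $x_{t_0}$ comes through $i$, and since $x'_i=x_i$ that term equals $x_{t_0}$, so $x'_{t_0}=x_{t_0}$ is not forced, but $x'_{t_0}<x_{t_0}$ together with $A_{t_0}\otimes\textbf{x}'\ge x'_{t_0}$ \emph{is} consistent; hence (a) as literally stated needs the stronger observation that \emph{every} coordinate reachable from $U$ in $\mathcal{D}_{\textbf{x}}$ via invariable witnesses lies in $U$. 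For (b): if $i\in D$ has an outgoing arc to $t\in U$, then $i$ is invariable, so pick its witness $t_0$; evaluating the $t_0$-th inequality at $\textbf{x}'$ exactly as in Proposition~\ref{Pro1.2} gives $A_{t_0}\otimes\textbf{x}'<x_{t_0}=x'_{t_0}$ (the last equality because $t_0\in U$ --- which itself needs justification), contradicting $\textbf{x}'\in\mathscr{X}$. The main obstacle, and where I expect to spend the real effort, is organizing this into a single clean propagation argument: namely, showing that the set $U$ of unchanged coordinates is \emph{forward-closed} under the invariable-witness relation $i\mapsto t(i)$ and also backward-closed, so that $U$ is a union of ``blocks'' with no arcs crossing the $U$/$D$ boundary in either direction, which is precisely isolatedness; the hypothesis ``no proper isolated subset'' then forces $U\in\{\emptyset,\text{Supp}(\textbf{x})\}$, and $U=\text{Supp}(\textbf{x})$ contradicts $\textbf{x}'\ne\textbf{x}$, so $U=\emptyset$, i.e.\ $\textbf{x}'<\textbf{x}$.
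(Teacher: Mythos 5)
Your proposal has a genuine gap, and in fact two intertwined ones. First, the local inequality argument only propagates in one direction: if $i\in D$ (i.e.\ $x'_i<x_i$) and $t$ is the witness of $i$ from Definition~\ref{De1.3} (so $(i,t)\in E_{\textbf{x}}$ and $t$ has no other incoming arc), then $A_t\otimes\textbf{x}'=(a_{ti}+x'_i)\oplus\bigoplus_{j\ne i}(a_{tj}+x'_j)<x_t$, forcing $t\in D$. Equivalently, $U$ is closed under passing to the \emph{predecessor} along a unique incoming arc. Nothing forces $U$ to be closed under witness arcs in the forward direction: for $i\in U$ with witness $t$, the inequality $A_t\otimes\textbf{x}'=x_t\ge x'_t$ is perfectly consistent with $t\in D$. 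So your step (a) (``no arc from $U$ to $D$'') is not provable from the inequalities, and you acknowledge this without resolving it; since isolatedness requires \emph{both} directions, the plan of exhibiting $U$ as a proper isolated subset cannot be completed as stated. Your step (b) is also not quite a contradiction as written: it shows the witness $t_0$ of $i\in D$ lies in $D$, which contradicts nothing unless you already know $t_0\in U$.

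The missing ingredient is the structural fact that the paper establishes first by a counting argument: since each of the $s=|\text{Supp}(\textbf{x})|$ invariable nodes needs its own witness, and distinct invariable nodes must have distinct witnesses (a shared witness would have two incoming arcs), every node of $\mathcal{D}_{\textbf{x}}$ has exactly one incoming arc and one outgoing arc, and the absence of proper isolated subsets then forces $\mathcal{D}_{\textbf{x}}$ to be a single elementary cycle through all of $\text{Supp}(\textbf{x})$. Once this is in hand, isolatedness is not needed at all: every arc is a witness arc, so the one-directional propagation ``$i\in D\Rightarrow$ successor of $i$ is in $D$'' travels once around the cycle and yields $D=\text{Supp}(\textbf{x})$, i.e.\ $U=\emptyset$. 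Without the cycle structure, your propagation can stall (a node of $U$ may fail to be any node's unique-incoming-arc witness), and with it, the detour through isolated sets is superfluous. I recommend restructuring the argument around this counting/cycle lemma rather than around the set $U$ being isolated.
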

\begin{proof}
We first prove that
the conditions of the theorem hold if and only if $\mathcal{D}_{\textbf{x}}$ is an elementary cycle containing all nodes of $\text{Supp}(\textbf{x})$.

Set $|\text{Supp}(\textbf{x})|=s$. For any $i\in \text{Supp}(\textbf{x})$, since $i$ is an invariable node, there is $j_{i}\in \text{Supp}(\textbf{x})$ $(j_{i}\neq i)$ such that $(i, j_{i})\in E_{\textbf{x}}$ and $j_{i}$ has no other incoming arc. Then $s$ invariable nodes need at least $s$ different such nodes $j_{i}$. This implies that every node in $\text{Supp}(\textbf{x})$ has a unique incoming arc that is not a loop. Then $\mathcal{D}_{\textbf{x}}$ has precisely $s$ arcs which are not loops. As every invariable node has at least one outgoing arc which is not a loop, we obtain that every node in $\text{Supp}(\textbf{x})$ has unique outgoing arc and unique incoming arc that are not loops. This defines a permutation on $\text{Supp}(\textbf{x})$. In general, this permutation can be decomposed into a number of cycles, but we can have only one cycle since $\text{Supp}(\textbf{x})$ does not have any proper subset that is isolated in $\mathcal{D}_{\textbf{x}}$. Thus, if the conditions of the theorem hold, then $\mathcal{D}_{\textbf{x}}$ is an elementary cycle
containing all nodes of $\text{Supp}(\textbf{x})$. The converse statement is obvious.

Now let such elementary cycle in $\mathcal{D}_{\textbf{x}}$ be $\sigma=(i_{1},i_{2},\cdots, i_{s}, i_{s+1}=i_{1})$. Obviously, there are no arcs other than $(i_{p-1}, i_{p})$ for all $2\leq p\leq s+1$ in $\mathcal{D}_{\textbf{x}}$. By the definition of arc in $\mathcal{D}_{\textbf{x}}$, for any $i_{p}$ $(2\leq p\leq s+1)$, we have
\begin{eqnarray}\label{Eq1.1}
A_{i_{p}}\otimes\textbf{x}=\sum\limits_{1\leq t\leq n}^{\oplus}(a_{i_{p}t}+x_{t})=a_{i_{p}i_{p-1}}+x_{i_{p-1}}=x_{i_{p}}
\end{eqnarray}
and
\begin{eqnarray}\label{Eq1.2}
(a_{i_{p}t}+x_{t})<x_{i_{p}}\mbox{ }\text{ for any }\mbox{ } t\neq i_{p-1}.
\end{eqnarray}
Take any $\textbf{x}'=(x_1', x_2', \cdots, x_n')^T\in\mathscr{X}^{\leq\textbf{x}}$. We assume without loss of generality that  $x_{i_{1}}'<x_{i_{1}}$. Then by (\ref{Eq1.1}) and (\ref{Eq1.2}), inequality $A_{i_{2}}\otimes \textbf{x}' \geq x'_{i_2}$ implies that $x_{i_{2}}'$ should satisfy $x_{i_{2}}'<x_{i_{2}}$. Otherwise,
$A_{i_{2}}\otimes\textbf{x}'=(a_{i_{2}i_{1}}+x_{i_{1}}')\oplus\sum\limits_{t\neq i_{1}}^{\oplus}(a_{i_{2}t}+x_{t}')<(a _{i_{2}i_{1}}+x_{i_{1}})\oplus x_{i_{2}}=x_{i_{2}}=x_{i_{2}}'$, a contradiction. Repeating the same argument for $i_{3},i_{4},\cdots, i_{s}$,  we obtain $x_{i}'<x_{i}$ for all $i\in \text{Supp}(\textbf{x})$ and the statement follows.
\end{proof}

Proposition \ref{Pro1.5} shows that if $\text{Supp}(\textbf{x})$ does not have proper subset that is isolated in $\mathcal{D}_{\textbf{\textbf{x}}}$  and all nodes in $\mathcal{D}_{\textbf{x}}$ are invariable for some $\textbf{x}=(x_1, x_2, \cdots, x_n)^T$ in $\mathscr{X}$ then $\textbf{x}\neq\textbf{x}^{1}\oplus\textbf{x}^{2}$ for any $\textbf{x}^{1}, \textbf{x}^{2}\in\mathscr{X}^{\leq\textbf{x}}$, hence by Lemma \ref{Lem2} such solution $\textbf{x}$ is an extremal solution. We summarize:

\begin{corollary}
\label{c:important}
Let $\emph{\textbf{x}}=(x_1, x_2, \cdots, x_n)^T\in\mathscr{X}$. If $\emph{\text{Supp}}(\emph{\textbf{x}})$ does not have any proper subset that is isolated in $\mathcal{D}_{\emph{\textbf{x}}}$ and if all nodes in $\mathcal{D}_{\emph{\textbf{x}}}$ are invariable, then \emph{\textbf{x}} is an extremal.
\end{corollary}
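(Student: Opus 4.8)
The plan is to derive this corollary directly from Proposition~\ref{Pro1.5} together with Lemma~\ref{Lem2}; no new combinatorial argument is needed. The hypotheses of the corollary are exactly those of Proposition~\ref{Pro1.5}, so that proposition already tells us that every $\textbf{x}'\in\mathscr{X}^{\leq\textbf{x}}$ satisfies $x'_i<x_i$ for all $i\in\text{Supp}(\textbf{x})$, while $x'_i=x_i=-\infty$ for $i\notin\text{Supp}(\textbf{x})$ because $\textbf{x}'\leq\textbf{x}$.

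The key step is then to observe that this strict decrease on the support is incompatible with writing $\textbf{x}$ as a max-combination of two elements of $\mathscr{X}^{\leq\textbf{x}}$. Concretely, I would take arbitrary $\textbf{x}^1,\textbf{x}^2\in\mathscr{X}^{\leq\textbf{x}}$ and apply the above to each: for every $i\in\text{Supp}(\textbf{x})$ we get $x^1_i<x_i$ and $x^2_i<x_i$, hence $(\textbf{x}^1\oplus\textbf{x}^2)_i=\max(x^1_i,x^2_i)<x_i$. Since $\textbf{x}\neq-\infty$, the set $\text{Supp}(\textbf{x})$ is nonempty, so $\textbf{x}^1\oplus\textbf{x}^2$ strictly falls below $\textbf{x}$ at some index, and therefore $\textbf{x}^1\oplus\textbf{x}^2\neq\textbf{x}$. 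As $\textbf{x}^1,\textbf{x}^2$ were arbitrary in $\mathscr{X}^{\leq\textbf{x}}$, the hypothesis of Lemma~\ref{Lem2} is met, and we conclude that $\textbf{x}$ is an extremal in $\mathscr{X}'$.

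There is essentially no obstacle here: the corollary is a repackaging of the preceding results, and indeed the paragraph following Proposition~\ref{Pro1.5} already sketches this reasoning. The only minor point to keep in mind is that the conclusion $\textbf{x}'<\textbf{x}$ of Proposition~\ref{Pro1.5} is to be read componentwise on $\text{Supp}(\textbf{x})$ (which is precisely how it is established in that proof), and that $\mathscr{X}^{\leq\textbf{x}}$ may be taken nonempty since $\textbf{x}$ has a finite component — though even if it were empty the hypothesis of Lemma~\ref{Lem2} would hold vacuously.
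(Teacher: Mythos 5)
Your proposal is correct and follows essentially the same route as the paper: the text immediately preceding the corollary derives it in exactly this way, invoking Proposition~\ref{Pro1.5} to conclude $\emph{\textbf{x}}\neq\emph{\textbf{x}}^{1}\oplus\emph{\textbf{x}}^{2}$ for any $\emph{\textbf{x}}^{1},\emph{\textbf{x}}^{2}\in\mathscr{X}^{\leq\emph{\textbf{x}}}$ and then applying Lemma~\ref{Lem2}. Your explicit componentwise check that $\max(x^1_i,x^2_i)<x_i$ on $\text{Supp}(\emph{\textbf{x}})$ just spells out the step the paper leaves implicit.
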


\begin{proposition}
\label{Pro:main}
Let $\emph{\textbf{x}}=(x_1, x_2, \cdots, x_n)^T\in\mathscr{X}$ and suppose that $\mathcal{D}_{\emph{\textbf{x}}}$ satisfies the following conditions:
\begin{itemize}
\item[{\rm (i)}] $\emph{\text{Supp}}(\emph{\textbf{x}})$ does not have any proper subset that is isolated in $\mathcal{D}_{\emph{\textbf{x}}}$;
\item[{\rm (ii)}] $\mathcal{D}_{\emph{\textbf{x}}}$ does not have two different node-disjoint cycles,
\item[{\rm (iii)}] There is only one variable node, say $i$, in $\mathcal{D}_{\emph{\textbf{x}}}$ and all other nodes of $\mathcal{D}_{\emph{\textbf{x}}}$ are invariable.
\end{itemize}
Then $x_{i}'<x_{i}$ for any $\emph{\textbf{x}}'=(x_1', x_2', \cdots, x_n')^T\in\mathscr{X}^{\leq\emph{\textbf{x}}}$.
\end{proposition}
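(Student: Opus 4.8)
The plan is to prove the contrapositive: assuming some $\textbf{x}'\in\mathscr{X}^{\leq\textbf{x}}$ satisfies $x_i'=x_i$, I will derive $\textbf{x}'=\textbf{x}$, contradicting $\textbf{x}'\neq\textbf{x}$. Put $U=\{j\in N\colon x_j'<x_j\}$; since $\textbf{x}'\leq\textbf{x}$, $\textbf{x}'\neq\textbf{x}$ and $x_j=-\infty\Rightarrow x_j'=-\infty$, we have $\emptyset\neq U\subseteq\text{Supp}(\textbf{x})$, while $i\notin U$. The engine of the proof is a one-line slack-propagation step: if $v\in\text{Supp}(\textbf{x})$, $x_v'=x_v$ and $V_{\textbf{x}}(v)=\{u\}$ is a singleton, then $x_u'=x_u$. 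Indeed $V_{\textbf{x}}(v)=\{u\}$ says $a_{vu}+x_u=x_v$ and $a_{vk}+x_k<x_v$ for all $k\neq u$; then from $A_v\otimes\textbf{x}'\geq x_v'=x_v$ and $a_{vk}+x_k'\leq a_{vk}+x_k<x_v$ ($k\neq u$) we get $a_{vu}+x_u'\geq x_v=a_{vu}+x_u$, hence (with $\textbf{x}'\leq\textbf{x}$) $x_u'=x_u$.

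Next I would organize the combinatorics of $\mathcal{D}_{\textbf{x}}$. A variable node is never invariable, so by (iii) the set $\text{Inv}:=\text{Supp}(\textbf{x})\setminus\{i\}$ consists exactly of the invariable nodes. Fix, for each $j\in\text{Inv}$, one witnessing arc $(j,f(j))$ as in Definition~\ref{De1.3}; then $f(j)\neq j$, $V_{\textbf{x}}(f(j))=\{j\}$, and $f$ is injective on $\text{Inv}$ (from $f(j_1)=f(j_2)=w$ we get $\{j_1\}=V_{\textbf{x}}(w)=\{j_2\}$). Hence $\text{Image}(f)$ omits exactly one node $r\in\text{Supp}(\textbf{x})$; each node of $\text{Image}(f)$ has a unique incoming arc in $\mathcal{D}_{\textbf{x}}$, namely its witnessing arc, so loops can occur only at $r$. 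Applying the slack step to $v=f(j)$ yields $f(j)\notin U\Rightarrow j\notin U$, i.e.\ $f(U)\subseteq U$; $f$ being injective and $U$ finite, $f|_U$ is a bijection of $U$, so $U$ is a disjoint union of cycles of $f$. Two distinct $f$-cycles are node-disjoint (a common node would force the two cyclic orbits to coincide), so (ii) allows at most one, and as $U\neq\emptyset$ we obtain $U=C$ for a single $f$-cycle $C$; since $C\subseteq\text{Image}(f)$ and every node of $C$ lies in the domain of $f$, we have $r\notin C$ and $i\notin C$.

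Finally I would exhibit an inequality that $\textbf{x}'$ must violate. Since $C\subseteq\text{Image}(f)$, every arc into $C$ starts in $C$, and an arc from $u\in C$ to any node of $\text{Image}(f)$ must be that node's unique incoming arc, hence equals $(u,f(u))$ with $f(u)\in C$; thus every arc leaving $C$ lands in $C\cup\{r\}$. Were there no arc from $C$ to $r$, the set $C$ would be isolated and proper, contradicting (i); so $(u^*,r)\in E_{\textbf{x}}$ for some $u^*\in C$, giving $u^*\in V_{\textbf{x}}(r)\neq\emptyset$ and $\max_k(a_{rk}+x_k)=x_r$. I claim $V_{\textbf{x}}(r)\subseteq C$. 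First, $r$ carries no loop, else $\{r\}$ would be a cycle node-disjoint from $C$, against (ii). If $r=i$, then $\text{Image}(f)=\text{Inv}$, $f$ permutes $\text{Inv}$, so $C=\text{Inv}$, and every incoming arc of $i$ comes from $\text{Inv}=C$ (no loop at $i$). If $r\neq i$, then in the subgraph of witnessing arcs $i$ is a sink and $r$ a source, so the nodes lying on no $f$-cycle form a single path $r=p_0\to p_1\to\cdots\to p_m=i$, which by (ii) (no second $f$-cycle) equals $\text{Supp}(\textbf{x})\setminus C$; an incoming arc of $r$ from some $p_\ell$ would then give a loop at $r$ ($\ell=0$, excluded) or the cycle $(p_0,\ldots,p_\ell,p_0)$, node-disjoint from $C$, contradicting (ii). Hence $V_{\textbf{x}}(r)\subseteq C=U$. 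But then $r\notin U$ gives $x_r'=x_r$, while $a_{rk}+x_k'<a_{rk}+x_k=x_r$ for every $k\in V_{\textbf{x}}(r)$ and $a_{rk}+x_k'\leq a_{rk}+x_k<x_r$ for $k\notin V_{\textbf{x}}(r)$, so $A_r\otimes\textbf{x}'<x_r=x_r'$, contradicting $\textbf{x}'\in\mathscr{X}$. This contradiction forces $U=\emptyset$, i.e.\ $\textbf{x}'=\textbf{x}$; therefore $x_i'<x_i$ for every $\textbf{x}'\in\mathscr{X}^{\leq\textbf{x}}$.

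The step I expect to be the main obstacle is the structural claim $V_{\textbf{x}}(r)\subseteq C$: it rests on a careful reading of how (i)--(iii) shape the witnessing subgraph — that outside the unique cycle $C$ its arcs form a single path into $i$, that loops may sit only at $r$ — together with the two cases $r=i$ and $r\in\text{Inv}$. Everything else is the elementary slack computation above and short "isolated set" / "two disjoint cycles" arguments invoking (i) and (ii).
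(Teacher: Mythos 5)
Your proof is correct, and it reorganizes the argument along a genuinely different route from the paper's. The paper argues forward: starting from a node $i_1$ with $x'_{i_1}<x_{i_1}$ it follows witnessing arcs (each ending at a node with a unique incoming arc), propagating the strict inequality until it either reaches $i$ directly (immediate contradiction with $x'_i=x_i$) or closes an elementary cycle; in the latter case it shows that the remaining nodes form a single directed path from a node $j_{q+1}$ (which receives all arcs leaving the cycle) to $i$, and propagates $x'<x$ along that path to contradict $x'_i=x_i$. You instead work with the deficiency set $U=\{j\colon x'_j<x_j\}$, prove it is invariant under the injective witnessing map $f$ via the (contrapositive) slack-propagation step, and conclude from the orbit decomposition and condition (ii) that $U$ is exactly one $f$-cycle $C$; the contradiction is then obtained not at $i$ but at the unique node $r\notin\mathrm{Image}(f)$, whose inequality $A_r\otimes\textbf{x}'\geq x'_r$ fails because $V_{\textbf{x}}(r)\subseteq C=U$ while $r\notin U$. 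The two proofs share the same combinatorial skeleton --- your $r$ is the paper's $j_{q+1}$ when $r\neq i$, corresponds to the paper's case $q=s-1$ when $r=i$, and your structural claim $V_{\textbf{x}}(r)\subseteq C$ does the work of the paper's spanning-path construction --- but your version buys cleaner global bookkeeping: the exact identification $U=C$ replaces the paper's step-by-step ``continuing like this'' constructions, and it lets you finish with a single violated inequality of the system rather than a further propagation to $i$. I checked the delicate points (injectivity of $f$; that loops can only sit at $r$; that all arcs leaving $C$ land in $C\cup\{r\}$, so that condition (i) forces an arc from $C$ to $r$; and the two cases $r=i$ and $r\neq i$ in the proof of $V_{\textbf{x}}(r)\subseteq C$) and they all hold.
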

\begin{proof}
Set $|\text{Supp}(\textbf{x})|=s$. By contradiction, suppose that there is a solution $\textbf{x}'=(x_1', x_2', \cdots, x_n')^T\in\mathscr{X}^{\leq\textbf{x}}$ such that $x_{i}'=x_{i}$. Then
there exists $i_{1}\in \text{Supp}(\textbf{x})$, $i_{1}\neq i$ such that $x_{i_{1}}'<x_{i_{1}}$.

Since $i_{1}$ is invariable, there exists $i_{2}\in \text{Supp}(\textbf{x})$, $i_{2}\neq i_{1}$ such that $(i_{1}, i_{2})\in E_{\textbf{x}}$ and $i_{2}$ has no other incoming arc, that is,
$A_{i_{2}}\otimes\textbf{x}=\sum\limits_{1\leq j\leq n}^{\oplus}(a_{i_{2}j}+x_{j})=a_{i_{2}i_{1}}+x_{i_{1}}=x_{i_{2}}$
and
$(a_{i_{2}j}+x_{j})<x_{i_{2}}\mbox{ }\text{ for any }\mbox{ } j\neq i_{1}.$
Then $x_{i_{1}}'<x_{i_{1}}$ implies $x_{i_{2}}'<x_{i_{2}}$ since $\textbf{x}'=(x_1', x_2', \cdots, x_n')^T\in\mathscr{X}^{\leq\textbf{x}}$. If $i_{2}=i$, then we obtain a contradiction with $x_{i}'=x_{i}$.
If $i_{2}\neq i$, then we can find $(i_{2}, i_{3})\in E_{\textbf{x}}$ and  $x_{i_{2}}'<x_{i_{2}}$ implies $x_{i_{3}}'<x_{i_{3}}$  can be similarly obtained, since $i_{2}$ is also invariable. Continuing like this, we obtain an elementary path
\begin{eqnarray}\label{Eq1.4}
\pi=(i_{1}, i_{2}, \cdots, i_{p}, i_{p+1}=i),
\end{eqnarray}
where $l(\pi)\geq 2$ and $i_{h}$ has unique incoming arc $(i_{h-1}, i_{h})$ for any $2\leq h\leq p+1$,
or an elementary cycle
\begin{eqnarray}\label{Eq1.5}
\sigma=(j_{1}=i_{1}, j_{2}=i_{2}, \cdots, j_{q}, j_{q+1}=j_{1}),
\end{eqnarray}
where $l(\sigma)\geq 2$ and
$j_{t}$ has unique incoming arc $(j_{t-1}, j_{t})$ for any $2\leq t\leq q+1$ and $i\notin L_\sigma$.

In the first case, we can obtain successively: $$x_{i_{1}}'<x_{i_{1}}, \cdots, x_{i_{p}}'<x_{i_{p}}, x_{i}'<x_{i},$$ a contradiction with $x_{i}'=x_{i}$. In the second case, we can get successively:
\begin{eqnarray}\label{Eq1.3}
x_{j_{1}}'<x_{j_{1}}, x_{j_{2}}'<x_{j_{2}}, \cdots, x_{j_{q}}'<x_{j_{q}}.
\end{eqnarray}

Furthermore, if $q=s-1$, then by the conditions of the proposition, $i$ has no loop ($(i, i)\in E_{\textbf{x}}$ contradicts condition (ii)) and has at least one incoming arc from node in $L_\sigma$, that is $(j_{r}, i)\in E_{\textbf{x}}$ for some $1\leq r\leq q$.
Note that no matter how many of such $j_{r}$ we have, \eqref{Eq1.3} must lead to $x_{i}'<x_{i}$, a contradiction with $x_{i}'=x_{i}$.

We now assume that $q\leq s-2$. Set $\{j_{q+1},j_{q+2},\ldots, j_{s-1}\}=\text{Supp}(\textbf{x})\backslash (L_{\sigma}\cup\{i\})$.
We argue that for each node in this set we can find a path to $i$ such that every arc $(j,j')$ in this path has the property that $j'$ has no other incoming arc in the tangent digraph. Without loss of generality, take $j_{q+1}$. Since this is an invariable node, such arc $(j,j')$ exists for $j=j_{q+1}$, and either it is $(j_{q+1},i)$ or it is (without loss of generality) $(j_{q+1},j_{q+2})$, and then we can continue with $j_{q+2}$. Note that this process will build a path to $i$ in a finite number of steps, since for every new node $j$ we see that: 1) an arc $(j,j')$ with the above mentioned property always exists, 2) $j'\notin\{j_1,\ldots,j_q\}$, and  $j'$ is not among the nodes on the path from $j_{q+1}$ to $j$ since there cannot be any cycle other than the one that we found above, 3) the number of the nodes is finite and we will have $j'=i$ at some stage.

Thus we obtain a directed tree spanning the nodes  $\{j_{q+1},j_{q+2},\ldots, j_{s-1}, i\}$, whose only root is $i$. Furthermore, this tree is a directed path to $i$, since each arc of the tree has the property that its end node has no other incoming arcs. Without loss of generality, let $j_{q+1}$ be the starting node of that path, and the path be $(j_{q+1},j_{q+2},\ldots,j_{s-1},i)$. Then $j_{q+1}$ is the only node to which we have arcs from $\{j_1,\ldots, j_q\}$. Note that we do not have any arcs ending at $j_{q+1}$ and beginning at the nodes of $\{j_{q+1},j_{q+2},\ldots,j_{s-1},i\}$, as this would create a cycle. Also, no matter how many such arcs from $\{j_1,\ldots, j_q\}$ to $j_{q+1}$ we have, \eqref{Eq1.3} must lead to $x_{j_{q+1}}'<x_{j_{q+1}}$. Following the path $(j_{q+1},j_{q+2},\ldots,j_{s-1},i)$ we then have
$$
x'_{j_{q+2}}<x_{j_{q+2}},\ldots, x'_{j_{s-1}}<x_{j_{s-1}}, x'_i<x_i,
$$
in contradiction with $x'_i=x_i$.
\end{proof}

\begin{remark}
\label{r:proofcase}
Contemplating the proof given above for the case when $\mathcal{D}_{\emph{\textbf{x}}}$ contains a cycle $\sigma=(j_{1}, j_{2}, \cdots, j_{q}, j_{q+1}=j_{1})$, where $l(\sigma)\geq 2$ and
$j_{t}$ has unique incoming arc $(j_{t-1}, j_{t})$ for any $2\leq t\leq q+1$, we see that in this case the variable node $i$ cannot have any outgoing arcs, because this would contradict either the properties of $\{j_1,\ldots, j_q\}$ or that $\mathcal{D}_{\emph{\textbf{x}}}$ does not contain two disjoint cycles. Therefore, in this case $i$ can be only I-variable node with no loop on it, but not a II-variable node.
\end{remark}

The next result can be deduced now from Proposition~\ref{Pro:main}:

\begin{corollary}
Let $\emph{\textbf{x}}=(x_1, x_2, \cdots, x_n)^T\in\mathscr{X}$. If $\mathcal{D}_{\emph{\textbf{x}}}$ satisfies the following conditions:
\begin{itemize}
\item[{\rm (i)}] $\emph{\text{Supp}}(\emph{\textbf{x}})$ does not have proper subsets that are isolated in
$\mathcal{D}_{\emph{\textbf{x}}}$,
\item[{\rm (ii)}] $\mathcal{D}_{\emph{\textbf{x}}}$ does not have two different node-disjoint cycles,
\item[{\rm (iii)}] there is only one variable node in $\mathcal{D}_{\emph{\textbf{x}}}$ and all other nodes are invariable.
\end{itemize}
Then \emph{\textbf{x}} is an extremal.
\end{corollary}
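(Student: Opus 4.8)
The strategy is to combine Proposition~\ref{Pro:main} with Lemma~\ref{Lem2}, exactly in the manner that Corollary~\ref{c:important} was deduced from Proposition~\ref{Pro1.5}. Suppose $\textbf{x}=(x_1,\ldots,x_n)^T\in\mathscr{X}$ satisfies conditions (i)--(iii), and let $i$ be the unique variable node. By Proposition~\ref{Pro:main}, every $\textbf{x}'\in\mathscr{X}^{\leq\textbf{x}}$ satisfies $x_i'<x_i$. Now suppose, for contradiction, that $\textbf{x}$ is not an extremal; by Lemma~\ref{Lem2} this means $\textbf{x}=\textbf{x}^1\oplus\textbf{x}^2$ for some $\textbf{x}^1,\textbf{x}^2\in\mathscr{X}^{\leq\textbf{x}}$. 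Applying Proposition~\ref{Pro:main} to each of $\textbf{x}^1$ and $\textbf{x}^2$, we get $x^1_i<x_i$ and $x^2_i<x_i$, hence $(\textbf{x}^1\oplus\textbf{x}^2)_i=\max(x^1_i,x^2_i)<x_i$, which contradicts $\textbf{x}=\textbf{x}^1\oplus\textbf{x}^2$ (since that equation forces the $i$-th coordinate to be $x_i$). Therefore $\textbf{x}$ is an extremal.

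The steps, in order, are: (1) invoke Proposition~\ref{Pro:main} to establish that the $i$-th coordinate strictly drops on all of $\mathscr{X}^{\leq\textbf{x}}$; (2) assume non-extremality and extract the decomposition $\textbf{x}=\textbf{x}^1\oplus\textbf{x}^2$ with $\textbf{x}^1,\textbf{x}^2\in\mathscr{X}^{\leq\textbf{x}}$ via Lemma~\ref{Lem2}; (3) read off the contradiction in the $i$-th coordinate. An alternative, equally short route is to verify directly that $\textbf{x}\neq\textbf{x}^1\oplus\textbf{x}^2$ for any $\textbf{x}^1,\textbf{x}^2\in\mathscr{X}^{\leq\textbf{x}}$ (again because the $i$-th coordinate of any such sum is $<x_i$) and then apply Lemma~\ref{Lem2} to conclude extremality immediately.

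There is essentially no obstacle here, since all the real work has been done in Proposition~\ref{Pro:main}: the corollary is a routine packaging of that proposition through Lemma~\ref{Lem2}. The only point deserving a word of care is that conditions (i)--(iii) are hypotheses on $\mathcal{D}_{\textbf{x}}$, and to apply Proposition~\ref{Pro:main} to $\textbf{x}^1$ and $\textbf{x}^2$ one must note that the relevant conclusion of Proposition~\ref{Pro:main} is stated as a property of $\textbf{x}$ itself (namely $x_i'<x_i$ for all $\textbf{x}'\in\mathscr{X}^{\leq\textbf{x}}$), so one does not need $\mathcal{D}_{\textbf{x}^1}$ or $\mathcal{D}_{\textbf{x}^2}$ to satisfy anything --- one simply observes that $\textbf{x}^1,\textbf{x}^2\in\mathscr{X}^{\leq\textbf{x}}$ and applies the already-proved strict-drop conclusion to them.
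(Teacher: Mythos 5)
Your proof is correct and is essentially the paper's own argument: the paper likewise applies Proposition~\ref{Pro:main} to conclude $x_i'<x_i$ for every $\textbf{x}'\in\mathscr{X}^{\leq\textbf{x}}$, deduces $\textbf{x}\neq\textbf{x}^1\oplus\textbf{x}^2$ for any such pair, and invokes Lemma~\ref{Lem2}. Your ``alternative, equally short route'' is literally the paper's phrasing, and your contrapositive version is equivalent.
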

\begin{proof}
It follows from Proposition~\ref{Pro:main} that $x_{i}'<x_{i}$ for any $\textbf{x}'\in\mathscr{X}^{\leq}$, where $i$ is the unique variable node in $\mathcal{D}_{\textbf{x}}$. Then $\textbf{x}\neq\textbf{x}^{1}\oplus\textbf{x}^{2}$ for any $\textbf{x}^{1}, \textbf{x}^{2}\in\mathscr{X}^{\leq\textbf{x}}$ and the statement follows.
\end{proof}

The results of this section can be summarized in the following characterization of extremals.

\begin{theorem}
\label{Th:main}
Let $\emph{\textbf{x}}=(x_1, x_2, \cdots, x_n)^T\in\mathscr{X}$. Then $\emph{\textbf{x}}$ is an extremal if and only if $D_{\emph{\textbf{x}}}$ satisfies the following conditions:
\begin{itemize}
\item[{\rm (i)}] $\emph{\text{Supp}}(\emph{\textbf{x}})$ does not have any proper subset isolated in
$D_{\emph{\textbf{x}}}$,
\item[{\rm (ii)}] $\mathcal{D}_{\emph{\textbf{x}}}$ does not have two different node-disjoint cycles,
\item[{\rm (iii)}] there is at most one variable node in $\mathcal{D}_{\emph{\textbf{x}}}$.
\end{itemize}
\end{theorem}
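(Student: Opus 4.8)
\textbf{Proof plan for Theorem~\ref{Th:main}.}

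The strategy is to assemble the theorem from the pieces already developed. The equivalence splits into necessity (extremal $\Rightarrow$ (i),(ii),(iii)) and sufficiency (conditions $\Rightarrow$ extremal), and I would handle the degenerate case $|\text{Supp}(\textbf{x})|=1$ separately via Proposition~\ref{Th1.1}, so assume $|\text{Supp}(\textbf{x})|\geq 2$ throughout. For necessity, I argue the contrapositive for each condition: if (i) fails, i.e.\ $\text{Supp}(\textbf{x})$ has a proper isolated subset, then $\textbf{x}$ is not an extremal by Proposition~\ref{Pro:Cond1}; if (ii) fails, i.e.\ $\mathcal{D}_{\textbf{x}}$ has two node-disjoint cycles, then $\textbf{x}$ is not an extremal by Proposition~\ref{Pro:Cond2}; if (iii) fails, i.e.\ there are at least two variable nodes, then $\textbf{x}$ is not an extremal by Corollary~\ref{Cor:cond3}. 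This gives the ``only if'' direction immediately.

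For sufficiency, assume (i),(ii),(iii) all hold and split on the number of variable nodes, which by (iii) is either zero or one. By Lemma~\ref{Lem1} every node of $\text{Supp}(\textbf{x})$ is I-variable, II-variable, or invariable. If there are no variable nodes, then all nodes are invariable, and together with (i) this is exactly the hypothesis of Corollary~\ref{c:important}, which yields that $\textbf{x}$ is an extremal. If there is exactly one variable node, then (i),(ii),(iii) are precisely the hypotheses of Proposition~\ref{Pro:main} (and the Corollary following it), whose conclusion is that $x_i'<x_i$ for every $\textbf{x}'\in\mathscr{X}^{\leq\textbf{x}}$, where $i$ is the unique variable node; hence $\textbf{x}$ cannot be written as $\textbf{x}^1\oplus\textbf{x}^2$ with $\textbf{x}^1,\textbf{x}^2\in\mathscr{X}^{\leq\textbf{x}}$ (the $i$-th coordinate would force one of them to equal $\textbf{x}$ in that coordinate, contradicting $x_i'<x_i$), so by Lemma~\ref{Lem2} $\textbf{x}$ is an extremal. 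This exhausts all cases under (iii) and completes the ``if'' direction.

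The proof is essentially bookkeeping: the substantive work is already done in Propositions~\ref{Pro:Cond1}, \ref{Pro:Cond2}, \ref{Pro1.5}, \ref{Pro:main} and Corollaries~\ref{Cor:cond3}, \ref{c:important}. The only point requiring a little care is the very last implication — translating ``$x_i'<x_i$ for all $\textbf{x}'\in\mathscr{X}^{\leq\textbf{x}}$'' into ``$\textbf{x}$ is not a nontrivial $\oplus$-sum in $\mathscr{X}^{\leq\textbf{x}}$'' — but this is exactly the observation used in the Corollary after Proposition~\ref{Pro:main}, so no new difficulty arises. I would therefore expect no real obstacle; the main thing to get right is to verify that the case division on the number of variable nodes is exhaustive given (iii) and that each case matches the hypotheses of the cited result verbatim.
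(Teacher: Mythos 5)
Your proposal is correct and follows exactly the route the paper itself takes: necessity via Proposition~\ref{Pro:Cond1}, Proposition~\ref{Pro:Cond2} and Corollary~\ref{Cor:cond3}, and sufficiency by splitting on the number of variable nodes (which Lemma~\ref{Lem1} makes exhaustive) using Corollary~\ref{c:important} for zero variable nodes and Proposition~\ref{Pro:main} with its corollary for one. The paper states the theorem as a summary of precisely these results, so your assembly, including the separate treatment of $|\text{Supp}(\textbf{x})|=1$, matches its intended proof.
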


Let us illustrate our main result by the following example.

\begin{example}
\label{Ex3.1}
{\rm 
Consider the system $A\otimes \textbf{x}\geq\textbf{x}$, where
$$A=\left(\begin{array}{ccccc}
                   -5 & 0 & -\infty & -\infty & -\infty \\
                   0 & -\infty & -\infty & -\infty & -\infty \\
                   0 & -\infty & -\infty & -\infty & -\infty \\
                   -\infty & -\infty & -3 & -\infty & 0 \\
                   -\infty & -\infty & -\infty & 0 & -\infty
                 \end{array}
    \right)
.$$
For solution $\textbf{x}^{1}=(0, 0, 0, -3, -\infty)^{T}$, the tangent digraph $\mathcal{D}_{\textbf{x}^{1}}=(\text{Supp}(\textbf{x}^{1}), E_{\textbf{x}^{1}})$ at $\textbf{x}^{1}$ is given on Figure \ref{f1}.
It is easy to check that $\text{Supp}(\textbf{x}^{1})$ does not have proper isolated subsets and $\mathcal{D}_{\textbf{x}^{1}}$ does not have two different node-disjoint cycles and there is only one variable node $4$ in $\mathcal{D}_{\textbf{x}^{1}}$ (the node $4$ is the \uppercase\expandafter{\romannumeral1}-variable node). Hence by Theorem 3.1 $\textbf{x}^{1}=(0, 0, 0, -3, -\infty)^{T}$ is an extremal.


For another solution $\textbf{x}^{2}=(0, 0, 0, 0, 0)^{T}$, the tangent digraph $\mathcal{D}_{\textbf{x}^{2}}=(\text{Supp}(\textbf{x}^{2}), E_{\textbf{x}^{2}})$ at $\textbf{x}^{2}$ is given on Figure \ref{f2}.
Obviously, there are two isolated node sets in $\mathcal{D}_{\textbf{x}^{2}}$, that is, $\{1, 2, 3\}$ and $\{4, 5\}$. Then by Theorem 3.1 we get that $\textbf{x}^{2}=(0, 0, 0, 0, 0)^{T}$ is not an extremal. In fact, $\textbf{x}^{2}=\textbf{x}^{2'}\oplus\textbf{x}^{2''}$, where $\textbf{x}^{2'}=(0, 0, 0, -\infty, -\infty)^{T}$, $\textbf{x}^{2''}=(-\infty, -\infty, -\infty, 0, 0)^{T}$ and $\textbf{x}^{2'}, \textbf{x}^{2''}$ are both solutions satisfying $\textbf{x}^{2}\neq\textbf{x}^{2'}$ and $\textbf{x}^{2}\neq\textbf{x}^{2''}$.}
\end{example}

\begin{figure}[H]
\begin{center}
    \begin{tikzpicture}[scale=0.8]
    \node[shape=circle,draw=black]  (1) at (0,0) {$2$};
    \node[shape=circle,draw=black]  (2) at (2.5,0) {$1$};
    \node[shape=circle,draw=black]  (3) at (5,0) {$3$};
    \node[shape=circle,draw=black]  (4) at (7.5,0) {$4$};

    \path [->](1) edge[bend left = 45] (2);
    \path [->](2) edge[bend left = 45] (1);
    \path [->](2) edge[ left = 10]  (3);
    \path [->](3) edge[ left = 10]  (4);
    \end{tikzpicture}
\caption{The tangent digraph $\mathcal{D}_{\textbf{x}^{1}}$ at $\textbf{x}^{1}$}
\label{f1}
\end{center}
\end{figure}

\begin{figure}[H]
\begin{center}
    \begin{tikzpicture}[scale=0.8]
    \node[shape=circle,draw=black]  (1) at (0,0) {$2$};
    \node[shape=circle,draw=black]  (2) at (2.5,0) {$1$};
    \node[shape=circle,draw=black]  (3) at (5,0) {$3$};
    \node[shape=circle,draw=black]  (4) at (7,0) {$4$};
    \node[shape=circle,draw=black]  (5) at (9.5,0) {$5$};

    \path [->](1) edge[bend left = 45] (2);
    \path [->](2) edge[bend left = 45] (1);
    \path [->](2) edge[ left = 10]  (3);
    \path [->](4) edge[bend left = 45] (5);
    \path [->](5) edge[bend left = 45] (4);
    \end{tikzpicture}
\caption{The tangent digraph $\mathcal{D}_{\textbf{x}^{2}}$ at $\textbf{x}^{2}$}
\label{f2}
\end{center}
\end{figure}

\if{
\begin{example}\hspace{-0.4em}\textbf{.}\label{Ex3.1}
\emph{
Consider the system $A\otimes \textbf{x}\geq\textbf{x}$, where
$$A=\left(
      \begin{array}{ccc}
        1 & 1 & -\infty \\
        2 & -2 & -\infty \\
        -\infty & 3 & -3 \\
      \end{array}
    \right)
.$$}

For solution $\emph{\textbf{x}}^{1}=(0, 2, 5)^{T}$, the tangent digraph $\mathcal{D}_{\emph{\textbf{x}}^{1}}=(\emph{\text{Supp}}(\emph{\textbf{x}}^{1}), E_{\emph{\textbf{x}}^{1}})$ at $\emph{\textbf{x}}^{1}$ is given on Figure \ref{f1}.
It is easy to check that $\emph{\text{Supp}}(\emph{\textbf{x}}^{1})$ does not have proper isolated subsets and $\mathcal{D}_{\emph{\textbf{x}}^{1}}$ does not have two different node-disjoint cycles (containing loop) and there is only one variable node $3$ in $\mathcal{D}_{\emph{\textbf{x}}^{1}}$ (the node $3$ is the \uppercase\expandafter{\romannumeral1}-variable node). Hence by Theorem \ref{Th:main} $\emph{\textbf{x}}^{1}=(0, 2, 5)^{T}$ is an extremal.


For another solution $\emph{\textbf{x}}^{2}=(0, -1, 2)^{T}$, the tangent digraph $\mathcal{D}_{\emph{\textbf{x}}^{2}}=(\emph{\text{Supp}}(\emph{\textbf{x}}^{2}), E_{\emph{\textbf{x}}^{2}})$ at $\emph{\textbf{x}}^{2}$ is given on Figure \ref{f2}.
Obviously, there are two isolated node sets in $\mathcal{D}_{\emph{\textbf{x}}^{2}}$, that is, $\{1\}$ and $\{2, 3\}$. Then by Theorem \ref{Th:main}  we get that $\emph{\textbf{x}}^{2}=(0, -1, 2)^{T}$ is not an extremal. In fact, $\emph{\textbf{x}}^{2}=\emph{\textbf{x}}^{2'}\oplus\emph{\textbf{x}}^{2''}$, where $\emph{\textbf{x}}^{2'}=(-3, -1, 2)$, $\emph{\textbf{x}}^{2''}=(0,-1, -\infty)$ and $\emph{\textbf{x}}^{2'}, \emph{\textbf{x}}^{2''}$ are both solutions satisfying $\emph{\textbf{x}}^{2}\neq\emph{\textbf{x}}^{2'}$ and $\emph{\textbf{x}}^{2}\neq\emph{\textbf{x}}^{2''}$.
\end{example}
\begin{figure}[H]
\begin{center}
    \begin{tikzpicture}[scale=0.8]
    \node[shape=circle,draw=black]  (1) at (0,0) {$1$};
    \node[shape=circle,draw=black]  (2) at (3,0) {$2$};
    \node[shape=circle,draw=black]  (3) at (6,0) {$3$};

    \path [->](1) edge[ left = 10]  (2);
    \path [->](2) edge[ left = 10]  (3);
    \end{tikzpicture}
\caption{The tangent digraph $\mathcal{D}_{\textbf{x}^{1}}$ at $\textbf{x}^{1}$}
\label{f1}
\end{center}
\end{figure}

\begin{figure}[H]
\begin{center}
    \begin{tikzpicture}[scale=0.8]
    \node[shape=circle,draw=black]  (1) at (0,0) {$1$};
    \node[shape=circle,draw=black]  (2) at (3,0) {$2$};
    \node[shape=circle,draw=black]  (3) at (6,0) {$3$};

    \path [->](2) edge[ left = 10]  (3);
    \end{tikzpicture}
\caption{The tangent digraph $\mathcal{D}_{\textbf{x}^{2}}$ at $\textbf{x}^{2}$}
\label{f2}
\end{center}
\end{figure}
}\fi

\section{Computational complexity}
\label{s:comp}

Theorem \ref{Th:main} provides a method for checking whether a given solution $\textbf{x}$ is an extremal or not: one has to check if all conditions (i)-(iii) hold. We are going to show that this can be done in linear ($O(|\text{Supp}(\textbf{x})|+|E_{\textbf{x}}|)$) time for any tangent digraph $D_{\textbf{x}}=(\text{Supp}(\textbf{x}), E_{\textbf{x}})$. Let us start with the following remarks:
\begin{itemize}
\item[(i)] The first condition of Theorem~\ref{Th:main} can be checked in $O(|\text{Supp}(\textbf{x})|+|E_{\textbf{x}}|)$ time by means of the depth-first search techique of~\cite{Tarjan1971}, for arbitrary digraph;
\item[(ii)] The second condition can be checked in polynomial time for arbitrary digraph, by the result of McCuaig~\cite{Mccuaig1993};
\item[(iii)] 
For each node we check the outgoing arcs: whether they exist and whether one of the outgoing arcs leads to a node with just one incoming arc. This can be done in $O(|E_{\textbf{x}}|)$ time.


\item[(iv)] Assume that condition (iii) holds and, moreover, all nodes are invariable. Then, by Corollary~\ref{c:important}, condition (i) of Theorem~\ref{Th:main} is sufficient for $\textbf{x}$ to be an extremal.
\end{itemize}
By remark (iv), we only need to consider the situation when condition (iii) of Theorem~\ref{Th:main} holds and exactly one of
 the nodes is variable and all other nodes are invariable. In this case, the result of \cite{Mccuaig1993} is not sufficient for linear complexity, demonstration of which will be based on 
 Algorithm~\ref{alg:extremality} stated below.

The following observation is implicit in the proofs written above:

\begin{lemma}
Suppose that a tangent digraph $D_{\emph{\textbf{x}}}=(\text{Supp}(\emph{\textbf{x}}), E_{\emph{\textbf{x}}})$, where\\
 $|\text{Supp}(\emph{\textbf{x}})|=s$, has exactly one variable node and all other $s-1$ are invariable.
Then at least $(s-1)$ nodes have exactly one incoming arc and no loop on them.
\end{lemma}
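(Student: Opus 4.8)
The plan is to exploit the structure that invariable nodes impose on the tangent digraph. Recall from Definition~\ref{De1.3} that a node $j$ is invariable precisely when there is some node $j'\neq j$ with $(j,j')\in E_{\textbf{x}}$ such that $j'$ has no other incoming arc in $\mathcal{D}_{\textbf{x}}$; call such $j'$ a \emph{private target} of $j$. So first I would assign to each of the $s-1$ invariable nodes a private target, obtaining a map $\varphi$ from the set of invariable nodes into $\text{Supp}(\textbf{x})$.

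The key observation is that $\varphi$ is injective: if $\varphi(j)=\varphi(k)=j'$ for invariable nodes $j\neq k$, then $j'$ has both $(j,j')$ and $(k,j')$ as incoming arcs, contradicting that $j'$ is a private target (which by definition has exactly one incoming arc, namely the arc coming from the node of which it is a private target). Hence $\varphi$ is an injection from a set of size $s-1$ into a set of size $s$, so its image has size $s-1$. Every node in the image of $\varphi$ is a private target, hence has exactly one incoming arc; moreover, since a private target $j'$ of $j$ satisfies $j'\neq j$, the unique incoming arc of $j'$ is not the loop $(j',j')$, so $j'$ carries no loop. Therefore at least $s-1$ nodes have exactly one incoming arc and no loop on them, which is the claim.

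The main (and really only) subtlety to get right is the bookkeeping around loops and around whether the variable node could coincide with a private target — but neither causes trouble: the argument never needs to know which node is variable, it only needs the $s-1$ invariability witnesses, and the definition of private target already forbids loops and forbids a second incoming arc. One should state explicitly that a private target has \emph{exactly} one incoming arc (it has at least the arc from its owner, and "no other incoming arc" gives exactly one), so that the $s-1$ nodes in the image of $\varphi$ are exactly the desired nodes. No deeper machinery from the earlier sections is needed; this is essentially a counting argument made explicit, as the phrase "implicit in the proofs written above" suggests.
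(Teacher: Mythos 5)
Your proposal is correct and follows essentially the same route as the paper: the paper's proof also assigns to each invariable node an outgoing arc whose endpoint has a unique incoming arc and no loop, and notes these endpoints must be pairwise distinct (else the incoming arc would be non-unique). Your version merely makes the injectivity of this assignment and the loop bookkeeping more explicit, which is a faithful elaboration rather than a different argument.
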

\begin{proof}
Each invariable node has at least one outgoing arc leading to a node with just one incoming arc and no loop on it.  For different invariable nodes such nodes with one incoming arc and no loop on them have to be different (otherwise the incoming arc is non-unique). Since there are $s-1$ invariable nodes, the number of nodes with exactly one incoming arc and no loop on them is at least $(s-1)$.
\end{proof}

If all nodes in a general digraph $\mathcal{D}=(V,E)$ have exactly one incoming arc, then $\mathcal{D}$ consists in general of a number of isolated components. Each component has just one cycle and a number of paths emanating from it, as it follows, e.g., from \cite[Section 3.4]{MPatWork}. In this case we see that conditions (i) and (ii) of Theorem~\ref{Th:main} are equivalent.

If we have $(|V|-1)$ nodes with exactly one incoming arc and no loops on them and one node with no incoming arc in a general digraph $\mathcal{D}=(V,E)$, then it gives rise to a component with no cycle. Therefore, in this case condition (i) implies condition (ii).

By the above discussion, when condition (iii) holds and exactly one of the nodes is variable and all other nodes are invariable, if all nodes have exactly one incoming arc, or if we have $(|V|-1)$ nodes with exactly one incoming arc and no loops on them and one node with no incoming arc, then checking condition (i) is sufficient for checking extremality. We now analyse the remaining case,
 where we have $(|V|-1)$ nodes with exactly one incoming arc and no loops on them, and one node with more than one incoming arc in general digraph $\mathcal{D}=(V,E)$. As we need to verify that there are no more than two disjoint cycles, we can equivalently consider the following kind of digraphs:
\begin{definition}
Digraph $\mathcal{D}=(V,E)$ with $|V|=n$ is called an {\em $o$-fountain}, if it has 1) $n-1$ nodes with only one outgoing arc and no loops on them, 2) one node $o$ with at least two outgoing arcs (one of which can be a loop).
\end{definition}

As all nodes but $o$ have only one outgoing arc, each arc outgoing from $o$ gives rise to a {\em jet:} a path ending with an elementary cycle. There are two kinds of jet: 1) {\em $\sigma$-jet:} a path starting at $o$ and ending with an elementary cycle $\sigma$ that does not contain $o$, 2) {\em $o$-cycle:} an elementary cycle containing $o$ (which could be also a loop on $o$).  Let us make a couple of observations.

\begin{lemma}
\label{l:observations}
The following properties hold for any $o$-fountain:
\begin{itemize}
\item[{\rm (i)}] For any $\sigma_1$-jet and $\sigma_2$-jet, either $\sigma_1=\sigma_2$ or $\sigma_1$ and $\sigma_2$ are node-disjoint;
\item[{\rm (ii)}] For any $\sigma$-jet, the cycle $\sigma$ is disjoint from any $o$-cycle.
\end{itemize}
\end{lemma}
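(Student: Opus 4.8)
The plan is to reduce both statements to a single structural feature of an $o$-fountain: every node $v\ne o$ has out-degree exactly $1$, so from any node $v\ne o$ there is a \emph{forced} walk $v,\,\mathrm{succ}(v),\,\mathrm{succ}^2(v),\dots$, obtained by repeatedly taking the unique outgoing arc, and this walk is uniquely determined until (if ever) it first reaches $o$ --- the only node at which the continuation is not forced. From this I would extract the following fact, call it $(\star)$: \emph{if $v\ne o$, then $v$ cannot lie both on an elementary cycle avoiding $o$ and on an elementary cycle containing $o$, and any two elementary cycles through $v$ that both avoid $o$ coincide.}

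The proof of $(\star)$ I have in mind goes as follows. Given an elementary cycle $C$ through $v$ with $o\notin C$, reading $C$ as a walk starting at $v$ visits only nodes of $C$, all distinct from $o$, so this walk is exactly the forced walk from $v$; it returns to $v$ after $|C|$ steps, hence the forced walk from $v$ is periodic, runs around $C$ forever, and never meets $o$. Any other elementary cycle $C'$ through $v$ with $o\notin C'$ is identified with the forced walk from $v$ in the same way, so $C'=C$. And if $C''$ is an elementary cycle through $v$ with $o\in C''$, then, since $o\ne v$, reading $C''$ from $v$ is forced up to the first visit of $o$, so the forced walk from $v$ does reach $o$, contradicting what was just shown. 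This establishes $(\star)$.

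Granting $(\star)$, both parts of the lemma are one line each, since by definition the cycle of a $\sigma$-jet avoids $o$, an $o$-cycle contains $o$, and both are elementary. For (i): if two jets have cycles $\sigma_1$ and $\sigma_2$ and $z\in\sigma_1\cap\sigma_2$, then $z\ne o$ lies on two elementary cycles both avoiding $o$, so $\sigma_1=\sigma_2$ by $(\star)$. For (ii): if a $\sigma$-jet and an $o$-cycle $\tau$ had a common node $z$, then $z\ne o$ (because $o\notin\sigma$) would lie simultaneously on $\sigma$, an elementary cycle avoiding $o$, and on $\tau$, an elementary cycle containing $o$ --- forbidden by $(\star)$; hence $\sigma\cap\tau=\emptyset$. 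The degenerate case in which $\tau$ is a loop on $o$ needs nothing extra, since then $o\notin\sigma$ already gives $\sigma\cap\tau=\emptyset$.

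The hard part, such as it is, lives entirely in $(\star)$, and even there the one delicate point is bookkeeping the exact step at which a forced walk first hits $o$, since $o$ is precisely the node at which ``forced'' ceases to apply. The argument uses nothing beyond finiteness of the graph and the defining out-degree condition of an $o$-fountain.
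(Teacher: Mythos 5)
Your proof is correct and rests on the same idea as the paper's: every node other than $o$ has a unique outgoing arc, so any walk avoiding $o$ is forced, which makes intersecting cycles (or jets) merge. You merely package this as the auxiliary claim $(\star)$ about forced walks, while the paper argues directly that two jets sharing a node other than $o$ must coincide from that point on; the substance is identical.
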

\begin{proof}
For any $\sigma_1$-jet and $\sigma_2$-jet, either $\sigma_2$-jet intersects with $\sigma_1$-jet at a node other than $o$ and then it follows $\sigma_1$-jet, ending with the same elementary cycle $\sigma_1=\sigma_2$, or $o$ is the only common node of both jets and then $\sigma_1$ and $\sigma_2$ are node-disjoint.

Also, if we assume that $\sigma$-jet contains a node of $o$-cycle other than $o$, then we obtain a contradiction, since after that node the $o$-cycle follows the $\sigma$-jet and ends up with the same cycle $\sigma$ instead of returning to $o$.
\end{proof}

Let us also introduce the following notation:

\begin{definition}
The subgraph of $\mathcal{D}$ consisting of all nodes that are accessible from $o$ and all arcs outgoing from $o$ and from these nodes, is denoted by $\mathcal{D}_o$. Furthermore, denote by $\overline{\mathcal{D}}_o$ the component of $\mathcal{D}$ which consists of all nodes and arcs of $\mathcal{D}_o$, all nodes of $\mathcal{D}$ which have access to a node of $\mathcal{D}_o$ and all arcs outgoing from such nodes. Denote by $\mathcal{D}\backslash \overline{\mathcal{D}}_o$ the subgraph of $\mathcal{D}$ consisting of all nodes and arcs which are not in $\overline{\mathcal{D}}_o$.
\end{definition}

Obviously, $\overline{\mathcal{D}}_o$ has at least one cycle, since each jet emanating from $o$ ends with a cycle, and all of its cycles are in $\mathcal{D}_o$. Also, $\mathcal{D}\backslash \overline{\mathcal{D}}_o$ can be decomposed into a number of isolated components, each of which contains exactly one cycle (following \cite[Section 3.4]{MPatWork}) and, in particular, if $\mathcal{D}\backslash \overline{\mathcal{D}}_o$ is non-empty then it has a cycle. This, together with Lemma~\ref{l:observations}, and the discussions before that Lemma, shows the validity of the following algorithm,
where by $\mathcal{D}^-$ we denote the digraph obtained from $\mathcal{D}$ by swapping the directions of all arcs.

\begin{breakablealgorithm}
 \renewcommand{\algorithmicrequire}{\textbf{Input:}}
 \renewcommand{\algorithmicensure}{\textbf{Output:}}
 \caption{ \label{alg:extremality}}
 \begin{algorithmic}[1]
 \REQUIRE  Vector $\textbf{x}\in\mathscr{X}$.
 \STATE Construct the graphs $\mathcal{D}_{\textbf{x}}$ and $\mathcal{D}^-_{\textbf{x}}$.
 \IF {more than one node of $\mathcal{D}_{\textbf{x}}$ is variable}
 \STATE return {\bf false.}
 \ENDIF
 \IF {all nodes have no more than one incoming arc}
 \IF{Supp({\bf x}) has an isolated subset in $\mathcal{D}_{\textbf{x}}$}
 \STATE return {\bf false}
 \ELSE
 \STATE return {\bf true}
 \ENDIF
 \ENDIF
 \STATE Find a node $o$ with $t\geq 2$ outgoing arcs (one of which can be a loop) in  $\mathcal{D}^-_{\textbf{x}}$.
 \STATE Inspect all $t$ jets emanating from $o$ in $\mathcal{D}^-_{\textbf{x}}$.
  \IF{there is an $o$-cycle and a $\sigma$-jet {\bf OR} there are $\sigma_1$-jet and $\sigma_2$-jet with $\sigma_1\neq\sigma_2$}
 \STATE return {\bf false}
 \ENDIF
 \STATE Construct the subgraph $\overline{\mathcal{D}}_o$ of $\mathcal{D}:=\mathcal{D}^-_{\textbf{x}}$.
 \IF{$\mathcal{D}\backslash \overline{\mathcal{D}}_o$ is non-empty}
 \STATE return {\bf false}
 \ELSE
 \STATE return {\bf true}
 \ENDIF
 \ENSURE Whether or not $\mathbf{x}$ is an extremal.
 \end{algorithmic}
\end{breakablealgorithm}

\begin{remark}
\label{r:DD-}
In the above algorithm, we construct graph $\mathcal{D}^-_{\emph{\textbf{x}}}$ only for the convenience of formulation.
The actual computations can be performed solely in terms of $\mathcal{D}_{\emph{\textbf{x}}}$.
\end{remark}

\begin{theorem}
Checking the extremality of $\emph{\textbf{x}}\in\mathscr{X}$ by Algorithm~\ref{alg:extremality}
requires no more than $O(n^2)$ operations.
\end{theorem}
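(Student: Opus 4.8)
The plan is to show that each step of Algorithm~\ref{alg:extremality} runs in $O(n^2)$ time, and that the number of steps is bounded, so the total running time is $O(n^2)$. The key realization is that the tangent digraph $\mathcal{D}_{\textbf{x}}=(\text{Supp}(\textbf{x}), E_{\textbf{x}})$ has at most $n$ nodes and at most $n^2$ arcs (since for each pair $(j,i)$ there is at most one arc), so any procedure that is linear in $|\text{Supp}(\textbf{x})| + |E_{\textbf{x}}|$ is automatically $O(n^2)$. First I would address the construction of $\mathcal{D}_{\textbf{x}}$ and $\mathcal{D}^-_{\textbf{x}}$: computing, for each $i\in\text{Supp}(\textbf{x})$, the value $\max_{k}(a_{ik}+x_k)$ and the set $V_{\textbf{x}}(i)$ takes $O(n)$ time per row and hence $O(n^2)$ in total; the reversed digraph is obtained by transposing the adjacency structure, again in $O(n^2)$.

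Next I would go through the branching steps. Checking whether more than one node is variable (lines~2--4): by remark (iii) preceding the algorithm, testing for each node whether it has an outgoing arc other than a loop and whether one of its outgoing arcs leads to a node with a single incoming arc can be done in $O(|E_{\textbf{x}}|)=O(n^2)$ time after precomputing the in-degrees, which itself is $O(n^2)$. Checking whether all nodes have at most one incoming arc and, if so, whether $\text{Supp}(\textbf{x})$ has a proper isolated subset (lines~5--11): the in-degree condition is $O(n)$ once in-degrees are known, and detecting an isolated subset amounts to checking whether the underlying undirected graph of $\mathcal{D}_{\textbf{x}}$ is connected, which by the depth-first search of~\cite{Tarjan1971} (remark~(i) before the algorithm) takes $O(|\text{Supp}(\textbf{x})|+|E_{\textbf{x}}|)=O(n^2)$. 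Finding a node $o$ with $\geq 2$ outgoing arcs in $\mathcal{D}^-_{\textbf{x}}$ (line~12) is $O(n^2)$, and inspecting the $t$ jets emanating from $o$ (line~13): since every node other than $o$ has exactly one outgoing arc in $\mathcal{D}^-_{\textbf{x}}$, each jet is a simple traversal that follows the unique outgoing arc until a previously visited node is reached, so all $t$ jets together visit each node $O(1)$ times and use $O(n)$ time; classifying each jet as an $o$-cycle or a $\sigma$-jet and detecting the bad configurations in lines~14--16 is then immediate.

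Then I would handle the final block: constructing $\overline{\mathcal{D}}_o$ (line~17) requires first collecting all nodes accessible from $o$ in $\mathcal{D}^-_{\textbf{x}}$ — which is exactly the union of the jets just traversed — and then collecting all nodes that have access to a node of $\mathcal{D}_o$, which is a reachability computation in the reversed graph (equivalently a forward search in $\mathcal{D}_{\textbf{x}}$ from the nodes of $\mathcal{D}_o$), costing $O(|\text{Supp}(\textbf{x})|+|E_{\textbf{x}}|)=O(n^2)$; testing whether $\mathcal{D}\setminus\overline{\mathcal{D}}_o$ is non-empty (lines~18--22) is a single pass over the node set, $O(n)$. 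Summing the costs of all steps, which is a constant number of operations each bounded by $O(n^2)$, gives the claimed bound. The correctness of the output (as opposed to its running time) has already been justified by Theorem~\ref{Th:main}, Corollary~\ref{c:important}, Lemma~\ref{l:observations} and the discussion preceding the algorithm, so nothing further is needed there.

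The main obstacle I anticipate is the jet-inspection and $\overline{\mathcal{D}}_o$-construction step: one has to argue carefully that, in the $o$-fountain regime, all $t$ jets can be traversed in total linear time even though naively each could have length up to $n$. The point is that the jets emanating from $o$ in $\mathcal{D}^-_{\textbf{x}}$ are node-disjoint except possibly at $o$ (Lemma~\ref{l:observations}(i)--(ii) transcribed to $\mathcal{D}^-_{\textbf{x}}$, where every non-$o$ node has out-degree one), so their total length is $O(n)$; the subsequent access computation is a standard graph search and does not revisit this subtlety. Everything else is bookkeeping with adjacency lists and in-/out-degree arrays. Because $\mathcal{D}^-_{\textbf{x}}$ is used only to make the exposition cleaner, by Remark~\ref{r:DD-} one may equally well run the jet traversals and access computations directly on $\mathcal{D}_{\textbf{x}}$ without affecting the complexity.
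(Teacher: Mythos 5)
Your proposal is correct and follows essentially the same decomposition as the paper's own proof: bound the variable-node and isolated-subset checks by $O(n^2)$ (via in-degrees and depth-first search), observe that the jet inspection touches each arc of $\mathcal{D}_o$ at most once, and charge the remaining work ($\overline{\mathcal{D}}_o$ and the final emptiness test) to the rest of the arcs before summing a constant number of $O(n^2)$ terms. You simply spell out more of the bookkeeping (constructing $\mathcal{D}_{\textbf{x}}$ from $A$, the out-degree-one argument for linear jet traversal) that the paper leaves implicit in its references to ``the above discussion.''
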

\begin{proof}
In the beginning of the algorithm, checking if more than one node of $\mathcal{D}_{\textbf{x}}$ is variable or if Supp({\bf x}) has an isolated subset in $\mathcal{D}_{\textbf{x}}$ requires no more than $O(n^2)$ by the above discussion. Next we inspect the jets emanating from $o$ to search for disjoint cycles, and in this inspection we do not need to inspect any arc more than once, therefore the complexity at this step is proportional to the number of arcs in $\mathcal{D}_o$. The complexity of the rest of the algorithm is proportional to the number of arcs in $\mathcal{D}^-_{\textbf{x}}\backslash\mathcal{D}_o$. The claim now follows by adding up the complexities of all parts of the algorithm. \end{proof}

\end{document}